\documentclass[11pt]{article}


\usepackage{amssymb}
\usepackage{amsmath}
\usepackage{color}
\usepackage{theorem}

\usepackage{pgf}
\newdimen\acadpgfunit
\acadpgfunit=0.24pt
\newdimen\acadpgflinewidth
\acadpgflinewidth=0.55pt

\usepackage[all]{xy}
\newcommand{\XYMATRIX}{\xymatrix@M=6pt}
\newcommand{\aremb}{\ar@{^{(}->}}
\newcommand{\arembfrom}{\ar@{<-^{)}}}


\numberwithin{equation}{section}

\theorembodyfont{\slshape}

  \newtheorem{THM}{Theorem}[section]
  \newtheorem{LEM}[THM]{Lemma}

\theorembodyfont{\rmfamily}

  \newtheorem{DEF}[THM]{Definition}
  \newtheorem{EX}{Example}[section]

\newif\ifQEDsign
\newcommand{\QED}{\global\QEDsigntrue\hfill$\square$}

\newenvironment{proof}%
    {\par\noindent\textit{Proof.}\global\QEDsignfalse}%
    {\ifQEDsign\else\QED\fi\par\bigskip\par}


\newcommand{\place}[3]{\underset{\llap{\raisebox{-1.5mm}{\scriptsize$#1$#2 place}}\uparrow}{#3}}

\newcommand{\preclex}{\mathrel{\prec_{\mathit{lex}}}}
\newcommand{\precalex}{\mathrel{\prec_{\mathit{alex}}}}
\newcommand{\lex}{\mathrel{<_{\mathit{lex}}}}
\newcommand{\alex}{\mathrel{<_{\mathit{alex}}}}
\newcommand{\galex}{\mathrel{>_{\mathit{alex}}}}
\newcommand{\clex}{\mathrel{<_{\overline{\mathit{lex}}}}}
\newcommand{\clexprime}{\mathrel{<_{\overline{\mathit{lex}}}}}
\renewcommand{\le}{\leqslant}
\renewcommand{\ge}{\geqslant}

\newcommand{\0}{\varnothing}

\renewcommand{\sec}{\cap}
\renewcommand{\phi}{\varphi}
\renewcommand{\epsilon}{\varepsilon}
\newcommand{\UNION}{\bigcup}

\newcommand{\CC}{\mathbf{C}}
\newcommand{\DD}{\mathbf{D}}

\newcommand{\KK}{\mathbf{K}}

\newcommand{\NN}{\mathbb{N}}

\newcommand{\QQ}{\mathbb{Q}}
\newcommand{\RR}{\mathbb{R}}

\newcommand{\ZZ}{\mathbb{Z}}
\newcommand{\union}{\cup}
\newcommand{\restr}[2]{\hbox{$#1$}\hbox{$\upharpoonright$}_{#2}}
\newcommand{\Boxed}[1]{\mbox{$#1$}}

\newcommand{\id}{\mathrm{id}}

\newcommand{\Ob}{\mathrm{Ob}}

\newcommand{\spec}{\mathrm{spec}}

\newcommand{\calA}{\mathcal{A}}
\newcommand{\calB}{\mathcal{B}}
\newcommand{\calC}{\mathcal{C}}
\newcommand{\calD}{\mathcal{D}}
\newcommand{\calE}{\mathcal{E}}

\newcommand{\calG}{\mathcal{G}}

\newcommand{\calL}{\mathcal{L}}
\newcommand{\calM}{\mathcal{M}}

\newcommand{\calP}{\mathcal{P}}

\newcommand{\calS}{\mathcal{S}}

\newcommand{\calU}{\mathcal{U}}
\newcommand{\calV}{\mathcal{V}}
\newcommand{\calW}{\mathcal{W}}
\newcommand{\calX}{\mathcal{X}}
\newcommand{\calY}{\mathcal{Y}}

\newcommand{\catG}{\overrightarrow{\mathbf{G}}}
\newcommand{\catM}{\overrightarrow{\mathbf{M}}}
\newcommand{\catP}{\overrightarrow{\mathbf{P}}}
\newcommand{\catU}{\overrightarrow{\mathbf{U}}}
\newcommand{\catV}{\mathbf{V}}
\newcommand{\catGR}{\mathbf{GR}}


\title{Pre-adjunctions and the Ramsey property}
\author{%
  Dragan Ma\v sulovi\'c\\
  University of Novi Sad, Faculty of Sciences\\
  Department of Mathematics and Informatics\\
  Trg Dositeja Obradovi\'ca 3, 21000 Novi Sad, Serbia\\
  e-mail: dragan.masulovic@dmi.uns.ac.rs}

\begin{document}
\maketitle

\begin{abstract}
  Showing that the Ramsey property holds for a class of finite structures
  can be an extremely challenging task and a slew of sophisticated methods have been proposed in literature.

  In this paper we propose a new strategy to show that a class of structures has the Ramsey property.
  The strategy is based on a relatively simple result in category theory and
  consists of establishing a \emph{pre-adjunction} between the category
  of structures which is known to have the Ramsey property, and the category of structures we are interested in.
  
  We demonstrate the applicability of this strategy by providing short proofs of three important
  well known results: we show the Ramsey property for the category of all finite linearly ordered posets with embeddings,
  for the category of finite convexly ordered ultrametric spaces with embeddings, and for
  the category of all finite linearly ordered metric spaces (rational metric spaces) with embeddings.

  \bigskip

  \noindent \textbf{Key Words:} Ramsey property, pre-adjunctions

  \noindent \textbf{AMS Subj.\ Classification (2010):} 05C55, 18A99
\end{abstract}

\section{Introduction}

Generalizing the classical results of F.~P.~Ramsey from the late 1920's, the structural Ramsey theory originated at
the beginning of 1970’s in a series of papers (see \cite{N1995} for references).
We say that a class $\KK$ of finite structures has the \emph{Ramsey property} (RP) if the following holds:
for any number $k \ge 2$ of colors and all $\calA, \calB \in \KK$ such that $\calA$ embeds into $\calB$
there is a $\calC \in \KK$
such that no matter how we color the copies of $\calA$ in $\calC$ with $k$ colors, there is a \emph{monochromatic} copy
$\calB'$ of $\calB$ in $\calC$ (that is, all the copies of $\calA$ that fall within $\calB'$ are colored by the same color).

Showing that the Ramsey property holds for a class of finite structures $\KK$
can be an extremely challenging task and a slew of sophisticated methods have been proposed in literature.
These methods are usually constructive: given $\calA, \calB \in \KK$ and $k \ge 2$
they prove the Ramsey property directly by constructing a structure $\calC \in \KK$
with the desired properties.

It was Leeb who pointed out in 1970 \cite{leeb-cat} that the use of category theory can be quite helpful
both in the formulation and in the proofs of results pertaining to structural Ramsey theory.
Instead of pursuing the original approach by Leeb (which has very fruitfully been applied to a wide range of
Ramsey problems \cite{leeb-cat, Nesetril-Rodl, GLR})
we proposed in~\cite{masulovic-ramsey} a systematic study of
a simpler approach motivated by and implicit in \cite{vanthe-more,zucker,mu-pon}.
We have shown in~\cite{masulovic-ramsey} that the Ramsey property is a genuine categorical property since it
is preserved by categorical equivalence. Moreover,
right adjoints preserve the Ramsey property while left adjoins preserve the dual Ramsey property
(see \cite{masulovic-ramsey} for details). In Section~\ref{opos.sec.prelim} we
give a brief overview of standard notions referring to first order structures and category theory,
and conclude with the reinterpretation of the Ramsey property in the language of category theory.

In this paper we propose a new strategy to show that a class of structures has the Ramsey property.
The strategy is based on a relatively simple result in category theory and
consists of establishing a \emph{pre-adjunction} between the category
of structures which is known to have the Ramsey property, and the category of structures we are interested in.

There have been many attempts to weaken the notion of adjunction (see \cite{preadj-1, preadj-2})
as adjoint situations are extremely important not only in category theory but also in other mathematical
theories such as linear algebra and operator theory.
The version that we focus on in this paper will be referred to as a \emph{pre-adjunction}
and is defined in Section~\ref{opos.sec.PA}.
The main technical result in this paper is Theorem~\ref{opos.thm.main} which shows that ``right pre-adjoints''
preserve the Ramsey property. This is another confirmation of the fact that the Ramsey property is an extremely
robust categorical property.

Since pre-adjunctions represent rather loose relationships between categories, establishing a pre-adjunction
is much easier than establishing an adjunction or a categorical equivalence between two categories.
Thus, it turns out that Theorem~\ref{opos.thm.main} has a practical consequence: it
offers a ``piggyback'' strategy of proving that a category has the Ramsey property.
This strategy is motivated by the proof of~\cite[Theorem 12.13]{Promel-Book} where the Ramsey property for finite
linearly ordered graphs was shown as a consequence of the Graham-Rothschild Theorem.
In Section~\ref{opos.sec.cases} we demonstrate the applicability of the strategy based on pre-adjunctions 
by providing short proofs of three important well known results.
We first provide a straightforward proof of the Ramsey property for
the category $\catP$ of all finite linearly ordered posets with embeddings
by establishing a pre-adjunction with the Graham-Rothschild category $\catGR(\{0\}, X)$ (see Example~\ref{opos.ex.GR}
for the definition of the Graham-Rothschild category, and \cite{PTW, fouche} for the original proof that the class of linearly
ordered posets has the Ramsey property).
As ultrametric spaces are intimately related to posets (trees, actually), it comes as no surprise that
in the next step we provide a new proof that the category $\catU$ of finite convexly ordered ultrametric spaces with embeddings
has the Ramsey property (see~\cite{van-the-metric-spaces} for the original proof). In order to do so
we establish a pre-adjunction between the category $\catP$ and a family of full subcategories of 
$\catU$ which covers the entire $\catU$.
This idea can then be modified so as to provide a new proof of the fact that the categories $\catM$ and $\catM_\QQ$
of all finite linearly ordered metric spaces with embeddings, resp.\ all finite linearly ordered rational
metric spaces with embeddings, have the Ramsey property (see~\cite{Nesetril-metric} for the original proof).

The new method presented here accompanied by a few more transfer principles
generalizes to arbitrary classes of relational structures but it does not seem to generalize for classes with
forbidden substructures~\cite{masul-NRT}.

\section{Preliminaries}
\label{opos.sec.prelim}

In order to fix notation and terminology in this section we give a brief overview of standard notions referring to first order structures
and category theory, and conclude with the reinterpretation of the Ramsey property in the language of category theory.
For a systematic treatment of category-theoretic notions we refer the reader to~\cite{AHS}.

\subsection{Structures}

A \emph{structure} $\calA = (A, \Delta)$ is a set $A$ together with a set $\Delta$ of
functions and relations on $A$, each having some finite arity.
The underlying set of a structure $\calA$, $\calA_1$, $\calA^*$, \ldots\ will always be denoted by its roman letter $A$, $A_1$, $A^*$, \ldots\ respectively.
A structure $\calA = (A, \Delta)$ is \emph{finite} if $A$ is a finite set.

An \emph{embedding} $f: \calA \hookrightarrow \calB$ is an injection $f: A \rightarrow B$ which respects
the functions in $\Delta$, and respects and reflects the relations in $\Delta$.
Surjective embeddings are \emph{isomorphisms}. We write $\calA \cong \calB$ to denote that $\calA$ and $\calB$
are isomorphic, and $\calA \hookrightarrow \calB$ to denote that there is an embedding of $\calA$ into $\calB$.

A structure $\calA$ is a \emph{substructure} of a structure
$\calB$ ($\calA \le \calB$) if the identity map is an embedding of $\calA$ into $\calB$.
Let $\calA$ be a structure and $\0 \ne B \subseteq A$. Then $\restr \calA B = (B, \restr \Delta B)$ denotes
the \emph{substructure of $\calA$ induced by~$B$}, where $\restr \Delta B$ denotes the restriction of $\Delta$ to~$B$.
Note that $\restr \calA B$ is not required to exist for every $B \subseteq A$. For example, if $\Delta$ contains functions,
only those $B$ which are closed with respect to all the functions in $\Delta$ qualify for the base set of a substructure.

\begin{EX}\label{opos.ex.all}
  $(a)$
  A \emph{linearly ordered graph} is a structure $\calG = (V, E, \Boxed<)$ where $V$ is the set of vertices,
  $E \subseteq \big\{\{x, y\} : x, y \in V; x \ne y \big\}$ is the set of edges of $\calG$, and $<$ is a linear order on $V$.

  $(b)$
  A \emph{linearly ordered poset} is a structure $\calA = (A, \Boxed\sqsubseteq, \Boxed<)$ where
  $(A, \Boxed\sqsubseteq)$ is a poset and $<$ is a linear order on $A$ which extends $\sqsubseteq$
  (that is, if $a \sqsubseteq b$ and $a \ne b$ then $a < b$).

  $(c)$
  A \emph{linearly ordered metric space} is a structure $\calM = (M, d, \Boxed<)$ where $d : M^2 \to \RR$ is a
  \emph{metric} and $<$ is a linear order on $M$. A linearly ordered metric space $(M, d, \Boxed<)$ is \emph{rational} if
  $d : M^2 \to \QQ$.

  $(d)$
  A \emph{convexly ordered ultrametric space} is a structure $\calU = (U, d, \Boxed<)$ where $d : U^2 \to \RR$ is an
  \emph{ultrametric} (that is, a metric satisfying $d(x, z) \le \max\{d(x, y), d(y, z)\}$), and $<$ is a linear order on $U$
  such that every ball in $\calU$ is convex with respect to~$<$ (in other words, if $x, y \in B(u, r)$ and $x < z < y$ then
  $z \in B(u, r)$).
\end{EX}

Let $\calL = (L, \Boxed<)$ be a finite linearly ordered set.
For a nonempty $X \subseteq L$ let $\min_\calL(X)$, resp.\ $\max_\calL(X)$, denote the minimum, resp.\ maximum, of $X$ in $\calL$.
As a convention we let $\min_\calL \0 = \mathstrut$the top element of $\calL$, and
$\max_\calL \0 = \mathstrut$the bottom element of $\calL$.

Let $\lex$, $\alex$ and $\clex$ denote the \emph{lexicographic}, \emph{anti-lexicographic} and
\emph{complemented lexicographic} ordering on $\calP(L)$, respectively, defined as follows:
\begin{align*}
A \lex B \text{ iff }  A \subset B, &\text{ or }\min\nolimits_\calL(B \setminus A) < \min\nolimits_\calL(A \setminus B) \text{ in case}\\
                                           &\text{ $A$ and $B$ are incomparable};\\
A \alex B \text{ iff } A \subset B, &\text{ or }\max\nolimits_\calL(A \setminus B) < \max\nolimits_\calL(B \setminus A) \text{ in case}\\
                                           &\text{ $A$ and $B$ are incomparable};\\
A \clex B \text{ iff } A \supset B, &\text{ or }\min\nolimits_\calL(A \setminus B) < \min\nolimits_\calL(B \setminus A) \text{ in case}\\
                                            &\text{ $A$ and $B$ are incomparable}.
\end{align*}
(Note that $A \clex B$ iff $L \setminus A \lex L \setminus B$, hence the name.)

For $k \ge 1$ let $\lex$ and $\alex$ denote the \emph{lexicographic} and \emph{anti-lexicographic}
ordering on $L^k$, respectively, defined as follows:
\begin{align*}
(a_1, \ldots, a_k) \lex (b_1, \ldots, b_k) \text{ iff }
    & \text{there is an $i$ such that } a_i < b_i\\
    & \text{ and } a_j = b_j \text{ for all } j < i,\\
(a_1, \ldots, a_k) \alex (b_1, \ldots, b_k) \text{ iff }
    & \text{there is an $i$ such that } a_i < b_i\\
    & \text{ and } a_j = b_j \text{ for all } j > i.
\end{align*}

It is easy to see that all these are linear orders on $\calP(L)$ and $L^k$, respectively.

\subsection{Categories and functors}

In order to specify a \emph{category} $\CC$ one has to specify
a class of objects $\Ob(\CC)$, a set of morphisms $\hom_\CC(\calA, \calB)$ for all $\calA, \calB \in \Ob(\CC)$,
an identity morphism $\id_\calA$ for all $\calA \in \Ob(\CC)$, and
the composition of mor\-phi\-sms~$\cdot$~so that
$(f \cdot g) \cdot h = f \cdot (g \cdot h)$, and
$\id_\calB \cdot f = f \cdot \id_\calA$ for all $f \in \hom_\CC(\calA, \calB)$.
Instead of $\hom_\CC(\calA, \calB)$ we write $\hom(\calA, \calB)$ whenever $\CC$ is obvious from the context.

Let $\CC$ be a category. Every class $\KK \subseteq \Ob(\CC)$ can be turned into a category by
letting $\hom_\KK(\calA, \calB) = \hom_\CC(\calA, \calB)$
($\calA, \calB \in \KK$) and $f \cdot_{\KK} g = f \cdot_\CC g$.
We say then that $\KK$ is a \emph{full subcategory} of~$\CC$.

A \emph{functor} $F : \CC \to \DD$ from a category $\CC$ to a category $\DD$ maps $\Ob(\CC)$ to
$\Ob(\DD)$ and maps morphisms of $\CC$ to morphisms of $\DD$ so that
$F(f) \in \hom_\DD(F(\calA), F(\calB))$ whenever $f \in \hom_\CC(\calA, \calB)$, $F(f \cdot g) = F(f) \cdot F(g)$ whenever
$f \cdot g$ is defined, and $F(\id_\calA) = \id_{F(\calA)}$.

Categories $\CC$ and $\DD$ are \emph{isomorphic} if there exists a pair of functors
$F : \DD \rightleftarrows \CC : G$ mutually inverse on both objects and morphisms.

A pair of functors $F : \DD \rightleftarrows \CC : G$ is an \emph{adjunction} provided there is a family of isomorphisms
$\Phi_{\calY,\calX} : \hom_\CC(F(\calY), \calX) \cong \hom_\DD(\calY, G(\calX))$ indexed by pairs
$(\calY, \calX) \in \Ob(\DD) \times \Ob(\CC)$ which is natural in both $\calC$ and $\calD$.
We say that $F$ is \emph{left adjoint} to $G$ and $G$ is \emph{right adjoint} to $F$.
(For the full definition of adjunction the reader is referred to~\cite{AHS}.)

\begin{EX}
  Structures and some appropriately chosen morphisms usually constitute a category. For example,
  all finite linearly ordered graphs with embeddings constitute a category which we denote with $\catG$;
  all finite linearly ordered posets with embeddings constitute a category which we denote with $\catP$;
  all finite linearly ordered metric spaces with embeddings constitute a category which we denote with $\catM$; and
  all finite convexly ordered ultrametric spaces with embeddings constitute a category which we denote with $\catU$.

  For a linearly ordered metric space $\calM = (M, d, \Boxed<)$ let
  $$
    \spec(\calM) = \{d(x, y) : x, y \in M\}
  $$
  denote the \emph{spectre} of $\calM$, that is, the set of all the distances that are attained by points in~$\calM$.
  For a nonempty finite $S \subseteq \RR$ of nonnegative reals let $\catM_S$ denote the full subcategory of $\catM$
  spanned by all those $\calM \in \Ob(\catM)$ satisfying $\spec(\calM) \subseteq S$, and let
  $\catU_S$ denote the full subcategory of $\catU$ spanned by all those $\calU \in \Ob(\catU)$ satisfying $\spec(\calU) \subseteq S$.
\end{EX}

Let us now specify a category where objects are not structures and morphisms are not structure preserving maps.
We shall start from a class of well known structures and structure preserving maps, and in a few steps derive
an abstract category which captures all the relevant information about the original structures and their
structure preserving maps although its objects are positive integers and morphisms are not maps.
This abstraction process will be of particular importance for presenting what we call the Graham-Rothschild
category in Example~\ref{opos.ex.GR}.

\begin{EX}
  Let $F$ be a field. All finitely dimensional vector spaces over $F$ together with linear maps constitute
  a category that we shall denote with $\catV_F$. Since every finitely dimensional vector space over $F$ is isomorphic
  to $F^n$ for some $n$, in some cases it may be more convenient to consider the category $\catV'_F$ whose objects
  are $\{F^n : n \in \NN\}$ and whose morphisms are all the linear maps $F^k \to F^n$ where $k, n \in \NN$.
  The category $\catV'_F$ is a full subcategory of $\catV_F$ and, up to isomorphism, every object of $\catV_F$
  is uniquely represented in $\catV'_F$. Such a subcategory is referred to as the \emph{skeleton} of the original
  category.
  
  Keeping the field $F$ fixed, what really matters when considering finite dimensional vector spaces over $F$ is the
  dimension of the space. Moreover, by fixing in each $F^n$ the standard base
  $e_i = (0, \ldots, 0, \place{i}{th}{1}, 0, \ldots, 0)$, $1 \le i \le n$, we can uniquely represent
  linear maps $F^k \to F^n$ by $n \times k$ matrices over $F$. So, let $\catV''_F$ denote the category whose
  objects are the positive integers $1, 2, 3, \ldots$ and each homset $\hom(k, n)$ consists of all the
  $n \times k$ matrices over $F$. The composition of morphisms is realized by matrix multiplication.
  Clearly, the categories $\catV'_F$ and $\catV''_F$ are isomorphic so the abstract category $\catV''_F$
  captures all the relevant information about finitely dimensional vector spaces over~$F$.
\end{EX}

The following example introduces what we call the Graham-Rothschild category which will be
of particular importance in the sequel.

\begin{EX}\label{opos.ex.GR}
  Let $A$ be a finite alphabet. A word $u$ of length $n$ over $A$ can be thought of as
  an element of $A^n$ but also as a mapping $u : \{1, 2, \ldots, n\} \to A$. In the latter case
  $u^{-1}(a)$, $a \in A$, denotes the set of all the positions in $u$ where $a$ appears.

  Let $X = \{x_1, x_2, \ldots\}$ be a countably infinite set of variables and let
  $A$ be a finite alphabet disjoint from $X$. An \emph{$m$-parameter word over $A$ of length $n$} is a word
  $w \in (A \union \{x_1, x_2, \ldots, x_m\})^n$ satisfying the following:
  \begin{itemize}
  \item
    each of the letters $x_1, \ldots, x_m$ appears at least once in $w$, and
  \item
    $\min w^{-1}(x_i) < \min w^{-1}(x_j)$ whenever $1 \le i < j \le m$.
  \end{itemize}
  Let $W^n_m(A)$ denote the set of all the $m$-parameter words over $A$ of length~$n$.
  For $u \in W^n_m(A)$ and $v = v_1 v_2 \ldots v_m \in W^m_k(A)$ let
  $$
    u \cdot v = u[v_1/x_1, v_2/x_2, \ldots, v_m/x_m] \in W^n_k(A)
  $$
  denote the word obtained by replacing each occurrence of $x_i$ in $u$ with $v_i$,
  simultaneously for all $i \in \{1, \ldots, m\}$.
  
  Let $\catGR(A, X)$ denote the \emph{Graham-Rothschild category over $A$ and $X$} whose objects are positive integers 1, 2, \ldots,
  whose morphisms are given by $\hom(k, n) = W^n_k(A)$ if $k \le n$ and $\hom(k, n) = \0$ if $k > n$, where the composition
  is $\cdot$ defined above and the identity morphism $\id_n$ is given by $x_1 x_2 \ldots x_n$.
\end{EX}

\subsection{Structural Ramsey property in the language of category theory}

Let $\CC$ be a category and $\calS$ a set. We say that
$
  \calS = \Sigma_1 \union \ldots \union \Sigma_k
$
is a \emph{$k$-coloring} of $\calS$ if $\Sigma_i \sec \Sigma_j = \0$ whenever $i \ne j$.
For an integer $k \ge 2$ and $\calA, \calB, \calC \in \Ob(\CC)$ we write
$
  \calC \longrightarrow (\calB)^{\calA}_k
$
to denote that for every $k$-coloring
$
  \hom_\CC(\calA, \calC) = \Sigma_1 \union \ldots \union \Sigma_k
$
there is an $i \in \{1, \ldots, k\}$ and a morphism $w \in \hom_\CC(\calB, \calC)$ such that
$w \cdot \hom_\CC(\calA, \calB) \subseteq \Sigma_i$.

A category $\CC$ has the \emph{Ramsey property} if
for every integer $k \ge 2$ and all $\calA, \calB \in \Ob(\CC)$
such that $\hom_\CC(\calA, \calB) \ne \0$ there is a
$\calC \in \Ob(\CC)$ such that $\calC \longrightarrow (\calB)^{\calA}_k$.

\begin{EX}\label{opos.ex.RP}
  The categories $\catG$, $\catP$, $\catM$, $\catM_\QQ$ and $\catU$ have the Ramsey property.
  Note that this is just a reformulation of the well known results
  proved in \cite{AH, Nesetril-Rodl-1976}, \cite{PTW, fouche}, \cite{Nesetril-metric}, \cite{Nesetril-metric}
  and \cite{van-the-metric-spaces}, respectively.
  
  For every finite set $A$ and a countably infinite set $X = \{x_1, x_2, \ldots\}$ disjoint from $A$ the Graham-Rothschild
  category $\catGR(A, X)$ has the Ramsey property. This is just a reformulation of the famous Graham-Rothschild Theorem:

 \begin{THM} \cite{GR}\label{opos.thm.G-R}
    Let $A$ be a finite alphabet and let
    $m, \ell \ge 1$ and $k \ge 2$. Then there exists an $n$ such that for every partition
    $W^n_\ell(A) = \Sigma_1 \union \ldots \union \Sigma_k$ there exist a $u \in W^n_m(A)$ and $j$ such that
    $\{u \cdot v : v \in W^m_\ell(A)\} \subseteq \Sigma_j$.
  \end{THM}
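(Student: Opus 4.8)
The plan is to derive the theorem from the Hales--Jewett theorem by induction on the number $\ell$ of parameters being colored, keeping that theorem as the engine behind both the base case and the inductive step. For the fixed alphabet $A$, write $\mathrm{GR}(m,\ell,k)$ for the assertion that some length $n$ works in the statement above for the given values of $m$, $\ell$, $k$. I would prove $\mathrm{GR}(m,\ell,k)$ for all $m \ge 1$ and all $k \ge 2$ simultaneously by induction on $\ell$; the theorem as stated (with $\ell \ge 1$) then falls out, the case $\ell = 1$ being the first genuine instance.

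For the base case I would treat the level just below the theorem, the coloring of the $0$-parameter words $W^n_0(A) = A^n$, i.e.\ the points of the cube. Here the Hales--Jewett theorem delivers a monochromatic combinatorial line, that is, a $1$-parameter word $w \in W^n_1(A)$ with $\{w \cdot a : a \in A\}$ monochromatic; iterating this in a product fashion $m$ times upgrades a line to an $m$-dimensional monochromatic subspace, producing $u \in W^n_m(A)$ with $\{u \cdot v : v \in W^m_0(A)\}$ monochromatic. This establishes $\mathrm{GR}(m,0,k)$ for all $m$ and $k$, which is what the inductive step will feed on.

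The inductive step, from $\ell - 1$ to $\ell$, carries the real difficulty. Given a $k$-coloring $\chi$ of $W^n_\ell(A)$, the idea is to induce from it a coloring of $(\ell-1)$-parameter words and then invoke the induction hypothesis. To an $(\ell-1)$-parameter word one wants to assign the \emph{pattern} recording how $\chi$ colors the various $\ell$-parameter words obtained by splitting off one further variable; the trouble is that this pattern ranges over a set whose size grows with $n$. The remedy is to apply the $\ell = 0$ case (Hales--Jewett for subspaces) first, restricting to a large monochromatic $m'$-parameter subword on which these patterns stabilize, so that only boundedly many patterns survive and the induced coloring uses boundedly many colors. The induction hypothesis $\mathrm{GR}(\cdot,\ell-1,\cdot)$ over this induced coloring then yields the required $m$-parameter word, which one decodes back through the composition $u \cdot v$. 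The main obstacle is precisely this encoding and decoding: one must set up the enlarged alphabet and the auxiliary coloring so that a word which is monochromatic for the induced coloring really is monochromatic for $\chi$ on \emph{all} of its $\ell$-parameter subwords, and so that the freshly introduced last variable still respects the constraint $\min w^{-1}(x_i) < \min w^{-1}(x_j)$ that first occurrences of the variables appear in increasing order.

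Finally, since $A$, $m$, $\ell$, $k$ are all finite and each stage of the recursion produces finite quantities, the induction terminates and outputs a single concrete $n$, as required. One could alternatively extract this $n$ by a compactness argument, but the inductive construction already furnishes it explicitly.
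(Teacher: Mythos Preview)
The paper does not prove this theorem. It is stated with the citation \cite{GR} inside Example~\ref{opos.ex.RP} and used thereafter as a black box: the Graham--Rothschild theorem is precisely the input that the pre-adjunction machinery feeds on, not something the paper re-derives. So there is no ``paper's own proof'' to compare against; your proposal is an independent attempt to supply a proof the paper deliberately omits.

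As for the proposal itself, the overall architecture---induction on $\ell$ with iterated Hales--Jewett furnishing the base $\mathrm{GR}(m,0,k)$ and a pattern-stabilisation argument driving the step---is one of the standard routes to Graham--Rothschild and is sound in outline. The soft spot is the inductive step. Your phrase ``the various $\ell$-parameter words obtained by splitting off one further variable'' from an $(\ell-1)$-parameter word is pointing in the wrong direction: an $(\ell-1)$-parameter word does not canonically contain $\ell$-parameter subwords. What the standard argument does instead is regard an $\ell$-parameter word as an $(\ell-1)$-parameter word over the enlarged alphabet $A \cup \{x_\ell\}$ (or, dually, as a $1$-parameter word over $A \cup \{x_1,\ldots,x_{\ell-1}\}$), subject to the first-occurrence constraint, and then run Hales--Jewett on that enlarged alphabet to stabilise the colour patterns before invoking the induction hypothesis. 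You gesture at ``the enlarged alphabet'' later, so you may have this in mind, but the write-up would need to make the encoding explicit and verify that the first-occurrence condition $\min w^{-1}(x_i) < \min w^{-1}(x_j)$ survives the decoding; this is exactly the ``main obstacle'' you flag but do not resolve. With that bookkeeping filled in, the plan goes through.
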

\end{EX}

\section{The Ramsey property and pre-adjunctions}
\label{opos.sec.PA}

There have been many attempts to weaken the notion of adjunction \cite{preadj-1, preadj-2}.
In this paper we consider the following version that we refer to as a pre-adjunction.

\begin{figure}
  $$
    \XYMATRIX{
      F(\calD) \ar[rr]^u                     & & \calC & & \calD \ar[rr]^{\Phi_{\calD, \calC}(u)}                    & & G(\calC) \\
      F(\calE) \ar[u]^v \ar[urr]_{u \cdot v} & &       & & \calE \ar[u]^f \ar[urr]_{\Phi_{\calE, \calC}(u \cdot v)}
    }
  $$
\caption{The requirement (PA)}
\label{opos.fig.PA}
\end{figure}

\begin{DEF}\label{opos.def.PA}
  Let $\CC$ and $\DD$ be categories. A pair of maps
  $$
    F : \Ob(\DD) \rightleftarrows \Ob(\CC) : G
  $$
  is a \emph{pre-adjunction between $\CC$ and $\DD$} provided there is a family of maps
  $$
    \Phi_{\calY,\calX} : \hom_\CC(F(\calY), \calX) \to \hom_\DD(\calY, G(\calX))
  $$
  indexed by the family $\{(\calY, \calX) \in \Ob(\DD) \times \Ob(\CC) : \hom_\CC(F(\calY), \calX) \ne \0\}$
  and satisfying the following (see Fig.~\ref{opos.fig.PA}):
  \begin{itemize}
  \item[(PA)]
  for every $\calC \in \Ob(\CC)$, every $\calD, \calE \in \Ob(\DD)$,
  every $u \in \hom_\CC(F(\calD), \calC)$ and every $f \in \hom_\DD(\calE, \calD)$ there is a $v \in \hom_\CC(F(\calE), F(\calD))$
  satisfying $\Phi_{\calD, \calC}(u) \cdot f = \Phi_{\calE, \calC}(u \cdot v)$.
  \end{itemize}
\end{DEF}
(Note that in a pre-adjunction $F$ and $G$ are \emph{not} required to be functors, just maps from the class of objects of one of the two
categories into the class of objects of the other category; also $\Phi$ is not required to be a natural isomorphism, just a family of
maps between hom-sets satisfying the requirement above.)

\begin{THM}\label{opos.thm.main}
  Let $\CC$ and $\DD$ be categories and let $F : \Ob(\DD) \rightleftarrows \Ob(\CC) : G$ be a pre-adjunction with
  $\Phi_{\calY,\calX} : \hom_\CC(F(\calY), \calX) \to \hom_\DD(\calY, G(\calX))$ as the corresponding
  family of maps between hom-sets. Assume that $\CC$ has the Ramsey property. Then $\DD$ has the Ramsey property.
\end{THM}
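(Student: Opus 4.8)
The plan is to transport a Ramsey witness found in $\CC$ back to $\DD$ through the maps $F$, $G$ and $\Phi$, exploiting the fact that $\Phi_{\calA,\calX}$ runs from $\hom_\CC(F(\calA),\calX)$ to $\hom_\DD(\calA,G(\calX))$ --- that is, in exactly the direction that lets us \emph{pull back} a coloring of hom-sets in $\DD$ to a coloring of hom-sets in $\CC$. Fix $k \ge 2$ and $\calA, \calB \in \Ob(\DD)$ with $\hom_\DD(\calA, \calB) \ne \0$; I must produce $\calC' \in \Ob(\DD)$ with $\calC' \longrightarrow (\calB)^{\calA}_k$. First I would check that $F(\calA), F(\calB) \in \Ob(\CC)$ form an admissible instance for the Ramsey property in $\CC$, i.e.\ that $\hom_\CC(F(\calA), F(\calB)) \ne \0$. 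This is the first use of (PA): applying it with $\calC := F(\calB)$, $u := \id_{F(\calB)} \in \hom_\CC(F(\calB), F(\calB))$, $\calD := \calB$, $\calE := \calA$ and any $f \in \hom_\DD(\calA, \calB)$ produces a morphism $v \in \hom_\CC(F(\calA), F(\calB))$, so this hom-set is nonempty. Since $\CC$ has the Ramsey property, there is then a $\calC \in \Ob(\CC)$ with $\calC \longrightarrow (F(\calB))^{F(\calA)}_k$, and I would set $\calC' := G(\calC)$.

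Next, given an arbitrary $k$-coloring $\hom_\DD(\calA, G(\calC)) = \Sigma_1 \union \ldots \union \Sigma_k$, I pull it back along $\Phi_{\calA, \calC}$, coloring each $u \in \hom_\CC(F(\calA), \calC)$ by the color of $\Phi_{\calA, \calC}(u)$. This is a genuine $k$-coloring of $\hom_\CC(F(\calA), \calC)$, so the choice of $\calC$ yields a color $i$ and a morphism $w \in \hom_\CC(F(\calB), \calC)$ with $w \cdot \hom_\CC(F(\calA), F(\calB))$ entirely of color $i$; in other words $\Phi_{\calA, \calC}(w \cdot p) \in \Sigma_i$ for every $p \in \hom_\CC(F(\calA), F(\calB))$. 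The candidate witness in $\DD$ is then $\tilde w := \Phi_{\calB, \calC}(w) \in \hom_\DD(\calB, G(\calC))$.

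It remains to verify that $\tilde w \cdot \hom_\DD(\calA, \calB) \subseteq \Sigma_i$, and this is exactly what (PA) delivers. For an arbitrary $f \in \hom_\DD(\calA, \calB)$, I apply (PA) with $\calC$, $\calD := \calB$, $\calE := \calA$, $u := w$ and $f$: it yields a $v \in \hom_\CC(F(\calA), F(\calB))$ with $\Phi_{\calB, \calC}(w) \cdot f = \Phi_{\calA, \calC}(w \cdot v)$. Hence $\tilde w \cdot f = \Phi_{\calA, \calC}(w \cdot v) \in \Sigma_i$, since $v$ lies in $\hom_\CC(F(\calA), F(\calB))$ and $w \cdot \hom_\CC(F(\calA), F(\calB))$ is monochromatic of color $i$. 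As $f$ was arbitrary, $\tilde w \cdot \hom_\DD(\calA, \calB) \subseteq \Sigma_i$, so $G(\calC) \longrightarrow (\calB)^{\calA}_k$, which completes the argument.

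I expect the substantive point --- as opposed to the routine bookkeeping --- to be recognizing that the hypotheses are arranged so that $\Phi$ pulls colorings the correct way, and that the equation in (PA) is precisely the identity needed to match $\tilde w \cdot f$ with an element of the monochromatic set $w \cdot \hom_\CC(F(\calA), F(\calB))$. The only genuine loose ends to watch are the nonemptiness conditions (so that the Ramsey property of $\CC$ applies to the pair $(F(\calA), F(\calB))$ and so that $\Phi_{\calA, \calC}$ and $\Phi_{\calB, \calC}$ are defined), and these are handled by the identity-morphism application of (PA) in the first step together with the observation that $w$ itself witnesses $\hom_\CC(F(\calB), \calC) \ne \0$.
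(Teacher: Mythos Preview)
Your proof is correct and follows essentially the same route as the paper's: pull back the coloring of $\hom_\DD(\calA, G(\calC))$ along $\Phi_{\calA,\calC}$, apply the Ramsey property of $\CC$ to obtain a monochromatic $w \in \hom_\CC(F(\calB),\calC)$, and then use (PA) to show that $\Phi_{\calB,\calC}(w)$ is the desired witness in $\DD$. The only difference is that you are more careful than the paper about the nonemptiness conditions needed to invoke the Ramsey property in $\CC$ and to ensure that $\Phi_{\calA,\calC}$ and $\Phi_{\calB,\calC}$ are defined; the paper's proof simply glosses over these.
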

\begin{proof}
  Take any $\calD, \calE \in \Ob(\DD)$ and an integer $k \ge 2$. Since $\CC$ has the Ramsey property, there is a $\calC \in \Ob(\CC)$
  such that $\calC \longrightarrow (F(\calD))^{F(\calE)}_k$. Let us show that $G(\calC) \longrightarrow (\calD)^{\calE}_k$.
  Take any coloring $\hom_\DD(\calE, G(\calC)) = \Sigma_1 \union \ldots \union \Sigma_k$ and construct a coloring
  $\hom_\CC(F(\calE), \calC) = \Sigma'_1 \union \ldots \union \Sigma'_k$ as follows:
  \begin{equation}\label{opos.eq.1}
    \Sigma'_i = \{u \in \hom_\CC(F(\calE), \calC) : \Phi_{\calE, \calC}(u) \in \Sigma_i\}.
  \end{equation}
  By the choice of $\calC$ there is a $u \in \hom_\CC(F(\calD), \calC)$ and a $j \in \{1, \ldots, k\}$ such that
  \begin{equation}\label{opos.eq.2}
    u \cdot \hom_\CC(F(\calE), F(\calD)) \subseteq \Sigma'_j.
  \end{equation}
  Let us show that
  \begin{equation}\label{opos.eq.3}
    \Phi_{\calD, \calC}(u) \cdot \hom_\DD(\calE, \calD) \subseteq \Sigma_j.
  \end{equation}
  Take any $f \in \hom_\DD(\calE, \calD)$. Since $F : \Ob(\DD) \rightleftarrows \Ob(\CC) : G$ is a pre-adjunction, there is a
  $v \in \hom_\CC(F(\calE), F(\calD))$ such that
  \begin{equation}\label{opos.eq.4}
    \Phi_{\calD, \calC}(u) \cdot f = \Phi_{\calE, \calC}(u \cdot v).
  \end{equation}
  By \eqref{opos.eq.2} we have that $u \cdot v \in \Sigma'_j$, so $\Phi_{\calE, \calC}(u \cdot v) \in \Sigma_j$ by
  \eqref{opos.eq.1}. This, together with \eqref{opos.eq.4} yields $\Phi_{\calD, \calC}(u) \cdot f  \in \Sigma_j$.
\end{proof}

This strategy is motivated by the proof of~\cite[Theorem 12.13]{Promel-Book} where the Ramsey property for finite
linearly ordered graphs was shown as a consequence of the Graham-Rothschild Theorem.
We shall now provide the same proof in the new parlance introduced above. In order to motivate the main idea
that might seem odd at first reading, and in order to explain the
underlying combinatorial idea of representing graphs by parametric words and
special subgraphs by subspaces we shall demonstrate the key aspects of the construction on a small example.

\begin{THM}\label{opos.thm.GRA}
  The category $\catG$ has the Ramsey property.
\end{THM}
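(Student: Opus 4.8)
The plan is to establish a pre-adjunction between the Graham-Rothschild category $\catGR(A, X)$ for a suitable alphabet $A$ and the category $\catG$ of finite linearly ordered graphs, and then invoke Theorem~\ref{opos.thm.main} together with the Graham-Rothschild Theorem (Theorem~\ref{opos.thm.G-R}), which guarantees that $\catGR(A, X)$ has the Ramsey property. In the notation of Theorem~\ref{opos.thm.main}, I take $\CC = \catGR(A, X)$ (which has the Ramsey property) and $\DD = \catG$ (the target), so I must supply object maps $F : \Ob(\catG) \to \Ob(\catGR(A, X))$ and $G : \Ob(\catGR(A, X)) \to \Ob(\catG)$ and a family $\Phi_{\calY, \calX}$ of maps between hom-sets satisfying (PA). The key combinatorial idea, as signalled in the text, is to encode a linearly ordered graph on $n$ vertices by its adjacency information read off the standard linear order: a graph $\calG$ with vertex set $\{1 < 2 < \cdots < n\}$ should correspond to the object $n$ (or some length determined by $n$) in the Graham-Rothschild category, and its edges should be recorded in the alphabet $A$. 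A natural choice is $A = \{0, 1\}$, recording for each pair $\{i, j\}$ whether it is an edge, so that a graph on $n$ vertices is represented on $\binom{n}{2}$ coordinates.

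First I would fix the encoding precisely. Given $\calG \in \Ob(\catG)$ on $n$ ordered vertices, I set $G(\calG)$-side data by listing the $\binom{n}{2}$ pairs in some canonical order (say lexicographically in the standard order of vertices) and reading off a word in $A^{\binom n2}$ whose $\{i,j\}$-th letter is $1$ iff $\{i,j\} \in E$. For the object map $F : \Ob(\catG) \to \Ob(\catGR(A,X))$ I would send a graph $\calG$ on $n$ vertices to the integer $\binom{n}{2}$ (the length of the coordinate word), and for $G : \Ob(\catGR(A,X)) \to \Ob(\catG)$ I would send an integer $N$ to a suitably large ordered graph in which all graphs encodable on $N$ coordinates appear as ordered subgraphs in the prescribed positions. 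The crux is the definition of $\Phi$: an $m$-parameter word $u \in W^n_m(A) = \hom_{\catGR}(m, n)$ should be turned into an embedding of ordered graphs $\Phi(u) : \calE \hookrightarrow G(\calC)$, where the parameter positions $x_1, \ldots, x_m$ of the word pick out the coordinate slots that remain free (hence a copy of the smaller graph), while the constant letters fix the already-determined adjacencies. Because the Graham-Rothschild coordinates are grouped by vertex-pairs, a genuine embedding of graphs forces parameters to respect the block structure of pairs, so I would either restrict $A$ and the words accordingly, or verify that the relevant morphisms $u$ automatically respect it.

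With the encoding and $\Phi$ in place, verifying (PA) is the technical heart: given $u \in \hom_{\catGR}(F(\calD), \calC)$ and an embedding $f : \calE \hookrightarrow \calD$ in $\catG$, I must produce a parameter word $v \in \hom_{\catGR}(F(\calE), F(\calD))$ with $\Phi_{\calD, \calC}(u) \cdot f = \Phi_{\calE, \calC}(u \cdot v)$. The natural candidate for $v$ is the parameter word that realizes $f$ at the level of coordinates — it substitutes, for each coordinate slot of $\calE$, the corresponding slot of $\calD$ determined by how $f$ maps vertices and hence vertex-pairs of $\calE$ into those of $\calD$. The identity $\Phi_{\calD,\calC}(u)\cdot f = \Phi_{\calE,\calC}(u\cdot v)$ should then reduce to the functoriality of substitution: composing the reconstruction embedding $\Phi_{\calD,\calC}(u)$ with $f$ selects exactly the same sub-assignment of coordinates as first composing the words $u$ and $v$ and then reconstructing. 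I expect the main obstacle to lie precisely here, in matching the block-of-pairs bookkeeping: one must check that $f$, an embedding of \emph{graphs}, induces a well-defined parameter word $v$ at the coordinate level (in particular that $v$ is a legitimate element of $W^{F(\calD)}_{F(\calE)}(A)$ with parameters appearing in the correct minimal-position order, which is where the linear ordering on vertices is used), and that the two routes around the square in Fig.~\ref{opos.fig.PA} commute on the nose rather than merely up to relabelling. Once (PA) is verified, the conclusion is immediate: by Theorem~\ref{opos.thm.main}, since $\catGR(A, X)$ has the Ramsey property, so does $\catG$.
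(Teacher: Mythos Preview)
Your high-level strategy is correct: build a pre-adjunction $F : \Ob(\catG) \rightleftarrows \Ob(\catGR(A,X)) : G$ and invoke Theorem~\ref{opos.thm.main}. But the specific encoding you propose does not work, and the places where you hedge (``a suitably large ordered graph'', ``I would either restrict $A$ \ldots\ or verify \ldots'') are exactly where the argument breaks. With $F(\calG) = \binom{n}{2}$, the map $\Phi_{\calD,N}$ must send a word $u \in W^N_{\binom{n}{2}}(\{0,1\})$ to an embedding $\calD \hookrightarrow G(N)$, i.e.\ it must produce \emph{$n$ vertices} of $G(N)$. Your parameter classes, however, are indexed by the $\binom{n}{2}$ \emph{pairs} of $\calD$; there is no canonical way to recover vertices from them, and an arbitrary $u$ certainly does not ``respect the block structure of pairs''. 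Without a concrete $G(N)$ and a concrete $\Phi$ you cannot even state~(PA), let alone verify it; and for your choice of $F$ no natural candidates present themselves.

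The paper (following Pr\"omel) makes different choices that resolve this. Take the one-letter alphabet $A=\{0\}$ and set $F(\calG)=|V|+|E|$: the first $|V|$ parameters stand for vertices and the remaining $|E|$ for edges. For $G(N)$ take the \emph{intersection graph} on $\calP(\{1,\ldots,N\})$ (adjacency is nonempty intersection), ordered by $\clex$. Given $u\in W^N_{|V|+|E|}(\{0\})$ with parameter classes $X_1,\ldots,X_{|V|+|E|}$, send vertex $v_i$ to the set $X_i \cup \bigcup_{j:\,v_i\in e_j} X_{|V|+j}$. Adjacency in $G(N)$ is then witnessed precisely by the shared ``edge block'' $X_{|V|+j}$, and the $\clex$ order is forced by the vertex block $X_i$; this is what makes $\Phi$ a genuine embedding. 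For~(PA), an embedding $f:\calG'\hookrightarrow\calG$ yields the word $h$ by reading off, for each block $X_j$ of $u$, which (vertex or edge) block of the image it falls into, or $0$ if it lies outside $f(\calG')$. The identity $\Phi(u)\circ f=\Phi(u\cdot h)$ then holds on the nose. The missing idea in your proposal is precisely this: allocate separate parameters to vertices \emph{and} to edges, and realise $G(N)$ as the intersection graph on subsets, so that adjacency is encoded by shared elements rather than by alphabet letters.
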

\begin{proof} (cf.\ \cite[Theorem 12.13]{Promel-Book})
  It suffices to show that there is a pre-adjunction
  $$
    F : \Ob(\catG) \rightleftarrows \Ob(\catGR(\{0\}, X)) : G,
  $$
  where $X$ is a countably infinite set of variables disjoint from $\{0\}$.
  The result then follows from Theorem~\ref{opos.thm.main} and the fact that the category $\catGR(\{0\}, X)$ has the Ramsey property
  (Example~\ref{opos.ex.RP}).

  For a $\calG = (V, E, \Boxed<) \in \Ob(\catG)$ let $F(\calG) = |V| + |E|$. On the other hand, for a
  positive integer $n$ (recall that $\Ob(\catGR(A, X)) = \NN$) let
  $$
    G(n) = (\calP(\{1, 2, \ldots, n\}), E_n, \Boxed\clex)
  $$
  denote the finite linearly ordered graph on $\calP(\{1, 2, \ldots, n\})$
  where $\{X, Y\} \in E_n$ if and only if $X \sec Y \ne \0$,
  and $\clex$ is the complemented lexicographic ordering of $\calP(\{1, 2, \ldots, N\})$ induced by
  the usual ordering of the integers.

  For a finite linearly ordered graph $\calG$ and a positive integer $N$ define
  $$
    \Phi_{\calG, N} : \hom_{\catGR(\{0\}, X)}(F(\calG), N) \to \hom_{\catG}(\calG, G(N))
  $$
  as follows. Let $\calG = (V, E, \Boxed<)$, where $V = \{v_1 < \ldots < v_n\}$ and
  $E = \{e_1 \clex \ldots \clex e_m\}$. Take any $u \in \hom_{\catGR(\{0\}, X)}(n + m, N) =
  W^N_{n + m}(\{0\})$ and for every $i \in \{1, \ldots, n + m\}$ let $X_i = u^{-1}(x_i)$. For $i \in \{1, \ldots, n\}$
  let
  $$
    \tilde v_i = X_i \union \UNION_{j \; : \; e_j \ni v_i} X_{n + j}.
  $$
  It is easy to see that $\hat u : \calG \to G(N) : v_i \mapsto \tilde v_i$ is an embedding of linearly ordered
  graphs (note that, by construction, $v_i$ and $v_j$ are adjacent if and only if $\tilde v_i \sec \tilde v_j \ne \0$),
  so we put $\Phi_{\calG, N}(u) = \hat u$.

  \medskip

  \textit{Example.}
  Let us demonstrate the above construction by means of a small example.
  Let $\calG = (\{1, 2, 3, 4\}, \{e_1, e_2, e_3\}, \Boxed<)$ be the following graph where $<$ is the usual
  ordering of the integers, and $e_1 = \{1,2\}$, $e_2 = \{2, 3\}$ and $e_3 = \{2, 4\}$:
  \begin{center}
  \begin{pgfpicture}
    \pgfsetxvec{\pgfpoint{\acadpgfunit}{0pt}}
    \pgfsetyvec{\pgfpoint{0pt}{\acadpgfunit}}
    \pgfsetlinewidth{\acadpgflinewidth}
    \pgftransformshift{\pgfpointxy{-87.5}{-187.5}}
  
    \begin{pgfscope}
      \pgfpathmoveto{\pgfpointxy{150.0}{400.0}}
      \pgfpathlineto{\pgfpointxy{275.0}{400.0}}
      \pgfusepath{stroke}
    \end{pgfscope}
    \begin{pgfscope}
      \pgfpathmoveto{\pgfpointxy{275.0}{400.0}}
      \pgfpathlineto{\pgfpointxy{400.0}{400.0}}
      \pgfusepath{stroke}
    \end{pgfscope}
    \begin{pgfscope}
      \pgfpathmoveto{\pgfpointxy{275.0}{400.0}}
      \pgfpathlineto{\pgfpointxy{275.0}{275.0}}
      \pgfusepath{stroke}
    \end{pgfscope}
    \begin{pgfscope}
      \pgfsetfillcolor{white}
      \pgfpathellipse{\pgfpointxy{150.0}{400.0}}{\pgfpointxy{8.0}{0.0}}{\pgfpointxy{0.0}{8.0}}
      \pgfusepath{fill,stroke}
    \end{pgfscope}
    \begin{pgfscope}
      \pgfsetfillcolor{white}
      \pgfpathellipse{\pgfpointxy{275.0}{400.0}}{\pgfpointxy{8.0}{0.0}}{\pgfpointxy{0.0}{8.0}}
      \pgfusepath{fill,stroke}
    \end{pgfscope}
    \begin{pgfscope}
      \pgfsetfillcolor{white}
      \pgfpathellipse{\pgfpointxy{400.0}{400.0}}{\pgfpointxy{8.0}{0.0}}{\pgfpointxy{0.0}{8.0}}
      \pgfusepath{fill,stroke}
    \end{pgfscope}
    \begin{pgfscope}
      \pgfsetfillcolor{white}
      \pgfpathellipse{\pgfpointxy{275.0}{275.0}}{\pgfpointxy{8.0}{0.0}}{\pgfpointxy{0.0}{8.0}}
      \pgfusepath{fill,stroke}
    \end{pgfscope}
    \pgftext[bottom,at={\pgfpointxy{150.0}{420.0}}]{1}
    \pgftext[bottom,at={\pgfpointxy{275.0}{420.0}}]{2}
    \pgftext[bottom,at={\pgfpointxy{400.0}{420.0}}]{3}
    \pgftext[top,at={\pgfpointxy{275.0}{255.0}}]{4}
    \pgftext[bottom,at={\pgfpointxy{212.5}{412.0}}]{$e_1$}
    \pgftext[bottom,at={\pgfpointxy{337.5}{412.0}}]{$e_2$}
    \pgftext[left,at={\pgfpointxy{287.0}{337.5}}]{$e_3$}
    \pgftext[bottom,left,at={\pgfpointxy{133.0}{258.0}}]{$\calG$}
  \end{pgfpicture}
  \end{center}
  Since $\calG$ has 4 vertices and 3 edges, we consider 7-parameter words of arbitrary length $N$ over the alphabet $\{0\}$.
  Let $N = 16$ and let
  $$
    u = 0 x_1 0 0 x_2 0 x_1 x_3 x_3 x_4 x_2 x_5 x_6 0 x_7 x_1 \in W^{16}_7(\{0\}).
  $$
  Then $X_1 = u^{-1}(x_1) = \{2, 7, 16\}$, $X_2 = u^{-1}(x_2) = \{5, 11\}$, $X_3 = u^{-1}(x_3) = \{8, 9\}$ and
  $X_4 = u^{-1}(x_4) = \{10\}$ correspond to the vertices of $\calG$, while
  $X_5 = u^{-1}(x_5) = \{12\}$, $X_6 = u^{-1}(x_6) = \{13\}$ and $X_7 = u^{-1}(x_7) = \{15\}$
  correspond to the edges of $\calG$. Using $X_1, X_2, \ldots, X_7$ we now construct a copy of $\calG$ in $G(16)$ as
  follows:
  $$
    \begin{array}{@{}l@{\quad}l@{}}
      \tilde v_1 = X_1 \union X_5 = \{2, 7, 16, 12\}, & (\text{1 is incident with } e_1),\\
      \tilde v_2 = X_2 \union X_5 \union X_6 \union X_7 = \{5, 11, 12, 13, 15\},    & (\text{2 is incident with } e_1, e_2, e_3),\\
      \tilde v_3 = X_3 \union X_6 = \{8, 9, 13\},     & (\text{3 is incident with } e_2),\\
      \tilde v_4 = X_4 \union X_7 = \{10, 15\},       & (\text{4 is incident with } e_3).
    \end{array}
  $$
  Clearly, $i$ and $j$ are adjacent in $\calG$ if and only if $\tilde v_i \sec \tilde v_j \ne \0$. Moreover,
  $\tilde v_1 \clex \tilde v_2 \clex \tilde v_3 \clex \tilde v_4$,
  whence follows that the subgraph of $G(16)$ induced by $\tilde v_1$, $\tilde v_2$, $\tilde v_3$ and $\tilde v_4$
  is isomorphic to~$\calG$.

  \medskip

  Going back to the proof,
  let $\calG' = (V', E', \Boxed{<'})$, where $V' = \{v'_1 \mathrel{<'} \ldots \mathrel{<'} v'_p\}$ and
  $E' = \{e'_1 \clexprime \ldots \clexprime e'_q\}$. Let $f : \calG' \hookrightarrow \calG$ be an
  embedding. For $j \in \{1, \ldots, q\}$ let
  $$
    X'_{p + j} = \hat u(f(v'_i)) \sec \hat u(f(v'_k)) \text{ where } e'_j = \{ v'_i, v'_k \},
  $$
  and then for $i \in \{1, \ldots, p\}$ put
  $$
    X'_i = \hat u(f(v'_i)) \setminus \UNION_{j =1}^q X'_{p + j}.
  $$
  Finally, define $h = h_1 h_2 \ldots h_{n + m} \in \hom_{\catGR(\{0\}, X)}(p + q, n + m) = W^{n + m}_{p + q}(\{0\})$ as follows:
  $$
    h_j = \begin{cases}
      x_i, & X_j \subseteq X'_i\\
      0,   & \text{otherwise.}
    \end{cases}
  $$
  Then it is a routine to check that $\Phi_{\calG, N}(u) \circ f = \Phi_{\calG', N}(u \cdot h)$.
  
  \medskip
  
  \textit{Example (continued).}
  Let us conclude the proof by illustrating the final step in the construction.
  Let $\calG'$ be a graph which embeds into $\calG$ via
  $f = \begin{pmatrix}1 & 2 & 3\\2 & 3 & 4\end{pmatrix}$:
  \begin{center}
  \begin{pgfpicture}
    \pgfsetxvec{\pgfpoint{\acadpgfunit}{0pt}}
    \pgfsetyvec{\pgfpoint{0pt}{\acadpgfunit}}
    \pgfsetlinewidth{\acadpgflinewidth}
    \pgftransformshift{\pgfpointxy{-87.5}{-175.0}}
  
    \begin{pgfscope}
      \pgfpathmoveto{\pgfpointxy{150.0}{400.0}}
      \pgfpathlineto{\pgfpointxy{275.0}{400.0}}
      \pgfusepath{stroke}
    \end{pgfscope}
    \begin{pgfscope}
      \pgfpathmoveto{\pgfpointxy{275.0}{400.0}}
      \pgfpathlineto{\pgfpointxy{400.0}{400.0}}
      \pgfusepath{stroke}
    \end{pgfscope}
    \begin{pgfscope}
      \pgfpathmoveto{\pgfpointxy{275.0}{400.0}}
      \pgfpathlineto{\pgfpointxy{275.0}{275.0}}
      \pgfusepath{stroke}
    \end{pgfscope}
    \begin{pgfscope}
      \pgfpathmoveto{\pgfpointxy{650.0}{400.0}}
      \pgfpathlineto{\pgfpointxy{650.0}{275.0}}
      \pgfusepath{stroke}
    \end{pgfscope}
    \begin{pgfscope}
      \pgfpathmoveto{\pgfpointxy{650.0}{400.0}}
      \pgfpathlineto{\pgfpointxy{775.0}{400.0}}
      \pgfusepath{stroke}
    \end{pgfscope}
    \begin{pgfscope}
      \pgfsetfillcolor{white}
      \pgfpathellipse{\pgfpointxy{150.0}{400.0}}{\pgfpointxy{8.0}{0.0}}{\pgfpointxy{0.0}{8.0}}
      \pgfusepath{fill,stroke}
    \end{pgfscope}
    \begin{pgfscope}
      \pgfsetfillcolor{white}
      \pgfpathellipse{\pgfpointxy{275.0}{400.0}}{\pgfpointxy{8.0}{0.0}}{\pgfpointxy{0.0}{8.0}}
      \pgfusepath{fill,stroke}
    \end{pgfscope}
    \begin{pgfscope}
      \pgfsetfillcolor{white}
      \pgfpathellipse{\pgfpointxy{400.0}{400.0}}{\pgfpointxy{8.0}{0.0}}{\pgfpointxy{0.0}{8.0}}
      \pgfusepath{fill,stroke}
    \end{pgfscope}
    \begin{pgfscope}
      \pgfsetfillcolor{white}
      \pgfpathellipse{\pgfpointxy{275.0}{275.0}}{\pgfpointxy{8.0}{0.0}}{\pgfpointxy{0.0}{8.0}}
      \pgfusepath{fill,stroke}
    \end{pgfscope}
    \begin{pgfscope}
      \pgfsetfillcolor{white}
      \pgfpathellipse{\pgfpointxy{650.0}{400.0}}{\pgfpointxy{8.0}{0.0}}{\pgfpointxy{0.0}{8.0}}
      \pgfusepath{fill,stroke}
    \end{pgfscope}
    \begin{pgfscope}
      \pgfsetfillcolor{white}
      \pgfpathellipse{\pgfpointxy{650.0}{275.0}}{\pgfpointxy{8.0}{0.0}}{\pgfpointxy{0.0}{8.0}}
      \pgfusepath{fill,stroke}
    \end{pgfscope}
    \begin{pgfscope}
      \pgfsetfillcolor{white}
      \pgfpathellipse{\pgfpointxy{775.0}{400.0}}{\pgfpointxy{8.0}{0.0}}{\pgfpointxy{0.0}{8.0}}
      \pgfusepath{fill,stroke}
    \end{pgfscope}
    \pgftext[bottom,at={\pgfpointxy{150.0}{420.0}}]{1}
    \pgftext[bottom,at={\pgfpointxy{275.0}{420.0}}]{2}
    \pgftext[bottom,at={\pgfpointxy{400.0}{420.0}}]{3}
    \pgftext[top,at={\pgfpointxy{275.0}{255.0}}]{4}
    \pgftext[bottom,at={\pgfpointxy{212.5}{412.0}}]{$e_1$}
    \pgftext[bottom,at={\pgfpointxy{337.5}{412.0}}]{$e_2$}
    \pgftext[left,at={\pgfpointxy{287.0}{337.5}}]{$e_3$}
    \pgftext[bottom,left,at={\pgfpointxy{133.0}{258.0}}]{$\calG$}
    \pgftext[bottom,at={\pgfpointxy{650.0}{420.0}}]{1}
    \pgftext[top,at={\pgfpointxy{650.0}{255.0}}]{3}
    \pgftext[left,at={\pgfpointxy{662.0}{337.5}}]{$e'_2$}
    \pgftext[bottom,left,at={\pgfpointxy{558.0}{258.0}}]{$\calG'$}
    \pgftext[bottom,at={\pgfpointxy{775.0}{420.0}}]{2}
    \pgftext[bottom,at={\pgfpointxy{712.5}{412.0}}]{$e'_1$}
  \end{pgfpicture}
  \end{center}
  Since $\calG'$ has 3 vertices and 2 edges we are looking for an $h \in W^{7}_{5}(\{0\})$. Let us first construct
  $X'_4$ and $X'_5$, and then $X'_1$, $X'_2$ i $X'_3$:
  \begin{align*}
    X'_4 &= \hat u(f(1)) \sec \hat u(f(2)) = \tilde v_2 \sec \tilde v_3 = \{ 13 \},\\
    X'_5 &= \hat u(f(1)) \sec \hat u(f(3)) = \tilde v_2 \sec \tilde v_4 = \{ 15 \},\\
    X'_1 &= \hat u(f(1)) \setminus (X'_4 \union X'_5) = \tilde v_2 \setminus \{ 13, 15 \} = \{ 5, 11, 12 \},\\
    X'_2 &= \hat u(f(2)) \setminus (X'_4 \union X'_5) = \tilde v_3 \setminus \{ 13, 15 \} = \{ 8, 9 \},\\
    X'_3 &= \hat u(f(3)) \setminus (X'_4 \union X'_5) = \tilde v_4 \setminus \{ 13, 15 \} = \{ 10 \}.
  \end{align*}
  The word $h = h_1 h_2 h_3 h_4 h_5 h_6 h_7$ can then be computed as follows:
      $X_2 \subseteq X'_1 \Rightarrow h_2 = x_1$,
      $X_5 \subseteq X'_1 \Rightarrow h_5 = x_1$,
      $X_3 \subseteq X'_2 \Rightarrow h_3 = x_2$,
      $X_4 \subseteq X'_3 \Rightarrow h_4 = x_3$,
      $X_6 \subseteq X'_4 \Rightarrow h_6 = x_4$,
      $X_7 \subseteq X'_5 \Rightarrow h_7 = x_5$,
  while $h_i = 0$ for the remaining $i$'s. Therefore, $h = 0 x_1 x_2 x_3 x_1 x_4 x_5$.

  \medskip

  Finally, let us show that $\Phi_{\calG, N}(u) \circ f = \Phi_{\calG', N}(u \cdot h)$.
  Clearly, $u \cdot h = 0 0 0 0 x_1 0 0 x_2 x_2 x_3 x_1 x_1 x_4 0 x_5 0$.
  To compute $\Phi_{\calG', N}(u \cdot h)$ note, first, that
  $X_1 = u^{-1}(x_1) = \{5, 11, 12\}$,
  $X_2 = u^{-1}(x_2) = \{8, 9\}$,
  $X_3 = u^{-1}(x_3) = \{10\}$,
  $X_4 = u^{-1}(x_4) = \{13\}$ and
  $X_5 = u^{-1}(x_5) = \{15\}$,
  so the copy of $\calG'$ in $G(16)$ we have thus encoded is:
  $$
    \begin{array}{l@{\quad}l}
      \tilde w_1 = X_1 \union X_4 \union X_5 = \{5, 11, 12, 13, 15\}, & (\text{1 is incident with } e'_1, e'_2),\\
      \tilde w_2 = X_2 \union X_4 = \{8, 9, 13\},    & (\text{2 is incident with } e'_1),\\
      \tilde w_3 = X_3 \union X_5 = \{10, 15\},     & (\text{3 is incident with } e'_2).
    \end{array}
  $$
  But this is exactly the subgraph of $\Phi_{\calG, N}(u)$ induced by the vertices $\tilde v_2, \tilde v_3, \tilde v_4$.
\end{proof}

This result is one of the first results in structural Ramsey theory. It was proved independently and in different contexts
in \cite{AH} and \cite{Nesetril-Rodl-1976}. A new proof based on a variation of the amalgamation technique called
the \emph{partite construction} was published in~\cite{Nesetril-Rodl-1989}. In contrast to these proofs which,
although based on different approaches, given $\calA$, $\calB$ and $k \ge 2$ explicitly construct $\calC$ such that
$\calC \longrightarrow (\calB)^\calA_k$, the proof presented in \cite[Theorem 12.13]{Promel-Book} takes a radically
different point of view: it simply encodes the context we are interested in into the context where Ramsey property has already
been proven. Our principal insight is that this approach can be turned into a general strategy for
transferring the Ramsey property from a context to another context. In the section that follows
we demonstrate the applicability of the strategy based on pre-adjunctions 
by providing short proofs of three important well known results.

Interestingly, the idea of the proof of Theorem~\ref{opos.thm.GRA} does not extend to (uniform) hypergraphs.
In the argument verifying the construction of the parametric word we essentially rely on the fact that every edge
is either part of the subgraph (and thus corresponds to an edge of the subgraph) or it has at most one end in the subgraph
and both these cases can be represented by corresponding parameter.

\section{Linearly ordered posets and metric spaces}
\label{opos.sec.cases}

In this section we apply the strategy based on pre-adjunctions
to provide new straightforward proofs of three important well known results.
We are first going to show that the category $\catP$ of all finite linearly ordered posets with embeddings has the Ramsey property
by establishing a pre-adjunction with the Graham-Rothschild category $\catGR(\{0\}, X)$ (see~\cite{PTW, fouche}
for the original proof). Interestingly, the category $\catP$ was the only known Ramsey class of structures where
the proof of the Ramsey property relied on proving first that the class has the ordering property.
We shall not define the ordering property here (for the definition and a detailed discussion we refer the reader to~\cite{Nesetril}),
but let us mention that the ordering property is a property related to the Ramsey property
which usually follows from the fact that the class under consideration has the Ramsey property.
The proof we present here is new not only because we use new proof strategies, but also because it does not
not rely on the ordering property. The ordering property can now be shown to follow from the Ramsey property for $\catP$
(the proof will appear elsewhere) and thus the aberration of the category $\catP$ has been rectified.

We then provide a new proof that the category $\catU$ of finite convexly ordered ultrametric spaces with embeddings
has the Ramsey property (see~\cite{van-the-metric-spaces} for the original proof). In order to do so
we establish a pre-adjunction between the category $\catP$ and a family of full subcategories of 
$\catU$ which covers the entire $\catU$.

This idea can then be modified so as to provide a new proof of the fact that the category $\catM$ of all finite
linearly ordered metric spaces with embeddings has the Ramsey property by establishing
a pre-adjunction between the category $\catP$ and a family of full subcategories of 
$\catM$ which covers the entire $\catM$ (see~\cite{Nesetril-metric} for the original proof).

\begin{THM}\label{opos.thm.1} (cf.\ \cite{PTW, fouche})
  The category $\catP$ has the Ramsey property.
\end{THM}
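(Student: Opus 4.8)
The plan is to mirror the proof of Theorem~\ref{opos.thm.GRA}: exhibit a pre-adjunction
$F : \Ob(\catP) \rightleftarrows \Ob(\catGR(\{0\}, X)) : G$ and then invoke Theorem~\ref{opos.thm.main} together with the fact that $\catGR(\{0\}, X)$ has the Ramsey property (Example~\ref{opos.ex.RP}). So I would look for a way to represent a finite linearly ordered poset by subsets of $\{1, \ldots, N\}$ cut out by the variable-blocks of a parameter word, exactly as vertices of a graph were represented there.

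Concretely, for $\calA = (A, \Boxed\sqsubseteq, \Boxed<)$ with $A = \{a_1 < \cdots < a_n\}$ I would set $F(\calA) = n$ (one parameter per element), let $G(N)$ be a fixed linearly ordered poset carried by $\calP(\{1,\ldots,N\})$, and, given $u \in W^N_n(\{0\})$ with blocks $X_i = u^{-1}(x_i)$, define $\Phi_{\calA,N}(u) = \hat u$ by a union of blocks encoding the order of $a_i$ — the first guess being the up-set encoding $\hat u(a_i) = \bigcup_{k\,:\,a_i \sqsubseteq a_k} X_k$ with $G(N) = (\calP(\{1,\ldots,N\}), \Boxed\supseteq, \Boxed\clex)$. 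First I would check that $\hat u$ is an embedding: the blocks are nonempty and pairwise disjoint, so $a_i \sqsubseteq a_j \iff \hat u(a_i) \supseteq \hat u(a_j)$; and since the defining property of a parameter word gives $\min X_1 < \cdots < \min X_n$ while $a_i \sqsubseteq a_k \Rightarrow i \le k$, one gets $\min \hat u(a_i) = \min X_i$, which feeds a short ``minimum argument'' showing $\min A < \min B \Rightarrow A \clex B$ and hence that $\hat u$ respects $<$ (and, being injective into a linear order, reflects it).

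The substance of the proof is the verification of (PA): given $f : \calB \hookrightarrow \calA$ and $u \in W^N_n(\{0\})$, produce $v \in W^{n}_{|B|}(\{0\})$ with $\hat u \circ f = \Phi_{\calB, N}(u \cdot v)$. As in the graph case I would try to read off $v$ from the composite copy $\hat u \circ f$: recompute, for each element of $\calB$, the block it should own in the sub-copy, and put $v_i = x_\ell$ precisely when the original block $X_i$ falls inside the $\ell$-th recomputed block (and $v_i = 0$ otherwise), so that $u \cdot v$ re-encodes the sub-copy.

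I expect this last step to be the main obstacle, and the place where posets genuinely differ from graphs. In a graph every edge is incident to exactly two vertices, so an edge-block not lying inside the chosen subgraph still meets at most one of the chosen vertex-sets and is harmlessly absorbed into a single vertex-block. The order relation of a poset, by contrast, is transitive, so one element may be comparable with many pairwise-incomparable elements; when we restrict along $f$, a block attached to an element of $\calA$ lying above (or below) several incomparable elements of $f(\calB)$ is shared by several sub-copy sets that have no common bound inside $\calB$, and no single parameter word can reproduce such sharing. The crux is therefore to arrange the encoding so that the representation is \emph{hereditary} — every block of $u$ must be absorbable into exactly one block of the re-encoded word. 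I would expect to have to refine the naive encoding for this (for instance by introducing separate parameters for comparabilities in the spirit of the edge-parameters of Theorem~\ref{opos.thm.GRA}, or by using the linear order $<$ to assign each shared block a canonical owner), and making this choice correct is exactly where the real work lies.
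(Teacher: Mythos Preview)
Your framework is exactly the paper's: a pre-adjunction $F : \Ob(\catP) \rightleftarrows \Ob(\catGR(\{0\}, X)) : G$ with $G(N) = (\calP(\{1,\ldots,N\}), \Boxed\supseteq, \Boxed\clex)$, and you have correctly located the single real difficulty, the verification of~(PA). Your diagnosis of why the encoding $F(\calA) = |A|$, $\hat u(a_i) = \bigcup_{a_i \sqsubseteq a_k} X_k$ breaks is also right: a block $X_k$ attached to an element $a_k$ lying above the $f$-images of several pairwise incomparable elements of $\calB$ would have to land in several of the disjoint blocks of $u \cdot v$ simultaneously.

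However, neither of your proposed repairs closes the gap. Assigning a shared block a ``canonical owner'' via $<$ cannot work: the block genuinely has to appear in $\hat u(f(b_j))$ for \emph{every} $b_j$ with $f(b_j) \sqsubseteq a_k$, so dropping it from all but one destroys the required equality $\hat u \circ f = \Phi_{\calB,N}(u \cdot v)$. Adding one parameter per comparable pair (the edge-parameter analogy) is also insufficient; the obstruction is not that comparabilities are unrecorded but that the family $\{\text{elements above } a\}$ is not stable under restriction along an embedding. Concretely, take $\calA$ with $a_1, a_2$ incomparable and both below $a_3$, and embed the two-element antichain $\calB$ onto $\{a_1,a_2\}$: no finite relabelling of disjoint blocks can make $X_3$ appear in both $\widehat{u\cdot v}(b_1)$ and $\widehat{u\cdot v}(b_2)$.

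The paper's missing idea is to take $F(\calA)$ to be the number of \emph{nonempty downsets} of $\calA$ (not $|A|$), list them as $D_1 \alex \cdots \alex D_m$, and set $\phi_u(i) = \bigcup\{X_\alpha : i \in D_\alpha\}$. The point is that for any embedding $f : \calB \hookrightarrow \calA$ the inverse image $f^{-1}(D_\alpha)$ of a downset is again a downset of $\calB$ (possibly empty), so one may put $h_\alpha = x_\beta$ when $f^{-1}(D_\alpha) = D'_\beta$ and $h_\alpha = 0$ when $f^{-1}(D_\alpha) = \0$; every block of $u$ is then absorbed into exactly one block of $u\cdot h$, and $\phi_u \circ f = \phi_{u\cdot h}$ becomes a two-line computation. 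In the small example above the non-principal downset $\{a_1,a_2\}$ is precisely what soaks up the problematic sharing. A secondary but genuine point is that, unlike in the graph proof, the ordering of the $D_\alpha$'s is not arbitrary: the anti-lexicographic order is needed both to make $h$ a legitimate parameter word (first occurrences of the variables in increasing order) and to make $\phi_u$ respect $\clex$.
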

\begin{proof}
  It suffices to show that there is a pre-adjunction
  $$
    F : \Ob(\catP) \rightleftarrows \Ob(\catGR(\{0\}, X)) : G,
  $$
  where $X$ is a countably infinite set of variables disjoint from $\{0\}$.
  The result then follows from Theorem~\ref{opos.thm.main} and the fact that the category $\catGR(\{0\}, X)$ has the Ramsey property
  (Example~\ref{opos.ex.RP}).

  Recall that a \emph{downset} in a finite linearly ordered poset $\calA = (A, \Boxed\sqsubseteq, \Boxed<)$
  is a subset $B \subseteq A$ such that $x \in B$ and $y \sqsubseteq x$ implies $y \in B$.
  For $a \in A$ let $\downarrow_\calA a = \{b \in A : b \sqsubseteq a\}$.
  Clearly, $\downarrow_\calA a$ is always a downset in $\calA$, but not all the downsets in a poset are of the form
  $\downarrow_\calA a$. To see this, take two incomparable elements $a, b \in A$. Then $\downarrow_\calA a \;\union \downarrow_\calA b$
  is a donwset in $\calA$ which is not of the form $\downarrow_\calA x$ for some $x \in A$.
  
  For an $\calA = (A, \Boxed\sqsubseteq, \Boxed<) \in \Ob(\catP)$ let
  $F(\calA) = \mathstrut$the number of distinct nonempty downsets in $(A, \Boxed\sqsubseteq)$.
  On the other hand, for a positive integer $n$ let $G(n) = \big(\calP(\{1, \ldots, n\}), \Boxed\supseteq, \Boxed\clex\big)$.
  Clearly, $G(n)$ is a linearly ordered poset.
  
  For a finite linearly ordered poset $\calA$ and a positive integer $n$ define
  $$
    \Phi_{\calA, n} : \hom_{\catGR(\{0\}, X)}(F(\calA), n) \to \hom_{\catP}(\calA, G(n))
  $$
  as follows. Let $\calA = (\{1, 2, \ldots, k\}, \Boxed\sqsubseteq, \Boxed<)$ where $<$ is the usual
  ordering of the integers. Let $D_1$, \ldots, $D_m$ be all the nonempty downsets in $\calA$ and let
  $D_1 \alex D_2 \alex \ldots \alex D_m$.
  For $u \in \hom_{\catGR(\{0\}, X)}(F(\calA), n) = W^n_m(\{0\})$, let $X_i = u^{-1}(x_i)$, $1 \le i \le m$, and let
  $a_i = \UNION\{X_\alpha : i \in D_\alpha\}$, $1 \le i \le k$. Put $\Phi_{\calA, n}(u) = \phi_u$
  where $\phi_u : \calA \to G(n) : i \mapsto a_i$.

  Let us pause for a moment to explain why we need to order the downsets of $\calA$ anti-lexicographically.
  In the proof of Theorem~\ref{opos.thm.GRA}, which was the principal motivation for this proof,
  it is important to order the edges of a graph somehow -- the choice of the actual ordering relation
  is irrelevant as long as this can be done systematically. Here, however, we have to be more careful.
  In order to ensure that it is always possible to construct a parametric word for a given subposet
  the ordering relation has to be chosen so that it does not change for subposets, and
  it orders downsets so that the downset of the vertex appear first.

  Going back to the proof, let us show that the definition of $\Phi$ is correct by showing
  that for every $u \in W^n_m(\{0\})$ the mapping $\phi_u$ is an embedding $\calA \hookrightarrow G(n)$.

  Let us first show that $i \sqsubseteq j$ implies $a_i \supseteq a_j$. Recall that
  $a_i = \UNION\{X_\alpha : i \in D_\alpha\}$ and $a_j = \UNION\{X_\beta : j \in D_\beta\}$.
  Take any $X_\beta \subseteq a_j$. Then $j \in D_\beta$ so $i \sqsubseteq j$ and the fact that
  $D_\beta$ is a downset imply $i \in D_\beta$. Therefore, $X_\beta \subseteq a_i$.
  
  Assume, next, that $i$ and $j$ are $\sqsubseteq$-incomparable in $\calA$ and let us show that
  $a_i$ and $a_j$ are incomparable as sets. Let $\downarrow_\calA i = D_\alpha$ and $\downarrow_\calA j = D_\beta$.
  Since $X_1, \ldots, X_m$ are pairwise disjoint we have that
  $i \in D_\alpha$ and $i \notin D_\beta$ imply $X_\alpha \subseteq a_i$ and $X_\alpha \not\subseteq a_j$.
  Analogously,
  $j \notin D_\alpha$ and $j \in D_\beta$ imply $X_\beta \not\subseteq a_i$ and $X_\beta \subseteq a_j$.
  Therefore, $a_i$ and $a_j$ are incomparable.
  
  Finally, let us show that $i < j$ implies $a_i \clex a_j$. Assume that $i < j$ and $u = u_1 u_2 \ldots u_n$.
  If $i \sqsubseteq j$ then, as we have just seen, $a_i \supseteq a_j$, so $a_i \clex a_j$ because $\clex$ extends $\supseteq$.
  Assume, therefore, that $i \not\sqsubseteq j$. Then $i$ and $j$ are incomparable in $\calA$, whence follows that
  $a_i$ and $a_j$ are incomparable as sets (previous paragraph). Seeking a contradiction, assume that $a_j \clex a_i$.
  Then $s := \min(a_j \setminus a_i) < \min(a_i \setminus a_j)$. Since $s \in a_j$ then there is a $q$ such that
  $s \in X_q$ and $j \in D_q$. Note that $s \in X_q$ means that $u_s = x_q$. Let $\downarrow_\calA i = D_p$.
  Clearly we have that $\max(D_p) = i$ and $X_p \subseteq a_i$. From $\max(D_p) = i < j \in D_q$
  we easily conclude that $D_p \alex D_q$, whence $p < q$. Therefore, $t := \min(u^{-1}(x_p)) < \min(u^{-1}(x_q)) \le s$
  (because $u_s = x_q$). From $u_t = x_p$ it follows that $t \in X_p \subseteq a_i$.
  Next, we note that $t \in a_j$ (if $t \notin a_j$ then $\min(a_i \setminus a_j) \le t$ so
  $s = \min(a_j \setminus a_i) < \min(a_i \setminus a_j) \le t$ contradicts $t < s$).
  Since $X_1, \ldots, X_m$ are pairwise disjoint it follows that $t \in X_p \subseteq a_j$, whence
  $j \in D_p = \downarrow_\calA i$, so $j \sqsubseteq i$. Contradiction.
  
  So, the definition of $\Phi$ is correct. We still have to show that this family of maps
  satisfies the requirement~(PA) of Definition~\ref{opos.def.PA}.
  
  Let $\calB = (\{1, 2, \ldots, \ell\}, \Boxed\sqsubseteq, \Boxed<)$ be a linearly ordered poset that embeds into $\calA$.
  Let $D'_1$, \ldots, $D'_d$ be all the nonempty downsets in $\calB$ and let $D'_1 \alex D'_2 \alex \ldots \alex D'_d$.
  Take any embedding $f : \calB \hookrightarrow \calA$ and let us show that there is a word
  $h = h_1 h_2 \ldots h_m \in W^m_d(\{0\})$ such that $\Phi_{\calA, n}(u) \circ f = \Phi_{\calB, n}(u \cdot h)$.

  Define $h = h_1 h_2 \ldots h_m \in W^m_d(\{0\})$ as follows:
  $$
    h_i = \begin{cases}
      x_j, & f^{-1}(D_i) = D'_j\\
      0,   & \text{otherwise.}
    \end{cases}
  $$
  Let us first show that $h$ is indeed a $d$-parameter word. Since every downset in $\calB$ is an inverse image of a downset in $\calA$
  (for every $j$, $D'_j = f^{-1}(D_i)$ where $D_i = \downarrow_\calA f(D'_j)$), each of the variables $x_1, \ldots, x_d$ appears at
  least once in $h$. Let us show that $\min(h^{-1}(x_\alpha)) < \min(h^{-1}(x_\beta))$ whenever $1 \le \alpha < \beta \le d$.
  Take $\alpha$, $\beta$ such that $1 \le \alpha < \beta \le d$ and let $\min(h^{-1}(x_\beta)) = q$. Since $h_q = x_\beta$
  we know that $f^{-1}(D_q) = D'_\beta$. Take $p$ so that $D'_\alpha = f^{-1}(D_p)$. Then $h_p = x_\alpha$.
  If $p < q$ then $\min(h^{-1}(x_\alpha)) \le p < q = \min(h^{-1}(x_\beta))$ and we are done. Assume, therefore, that $p > q$.
  So, we have that $D_p \galex D_q$ (because $p > q$) and $f^{-1}(D_p) \alex f^{-1}(D_q)$ (because
  $\alpha < \beta$ whence $f^{-1}(D_p) = D'_\alpha \alex D'_\beta = f^{-1}(D_q)$).
  
  \medskip
  
  Claim. There exists a $D_r$ such that $D_r \alex D_q$ and $f^{-1}(D_r) = f^{-1}(D_p)$.
  
  Proof. Let $D_r = \downarrow_\calA f(f^{-1}(D_p))$. It is easy to show that $f^{-1}(D_r) = f^{-1}(D_p)$. Let us show that
  $D_r \alex D_q$. We know that $f^{-1}(D_p) \alex f^{-1}(D_q)$, so $f(f^{-1}(D_p)) \alex f(f^{-1}(D_q))$.
  Since $f(f^{-1}(D_q)) \subseteq D_q$ it follows that $f(f^{-1}(D_q)) \alex D_q$ (as $\alex$ extends $\subseteq$).
  Therefore, $f(f^{-1}(D_p)) \alex D_q$. Finally,
  $D_r = \Boxed{\downarrow_\calA} f(f^{-1}(D_p)) \alex \Boxed{\downarrow_\calA} D_q = D_q$ (because $D_q$ is a downset).
  This concludes the proof of the claim.
  
  \medskip
  
  Now, $D_r \alex D_q$ implies that $r < q$, while $f^{-1}(D_r) = f^{-1}(D_p) = D'_\alpha$ means that $h_r = x_\alpha$.
  Therefore, $\min(h^{-1}(x_\alpha)) \le r < q = \min(h^{-1}(x_\beta))$, which completes the proof that $h$ is a $d$-parameter word.
  
  Let $X'_i = (u \cdot h)^{-1}(x_i)$, $1 \le i \le d$, and $a'_j = \UNION \{X'_\beta : j \in D'_\beta\}$, $1 \le j \le |B|$.
  The following is a straightforward but useful observation:
  if $h^{-1}(x_\beta) = \{\alpha_1, \ldots, \alpha_s\}$ then
  $D'_\beta = f^{-1}(D_{\alpha_1}) = \ldots = f^{-1}(D_{\alpha_s})$
  and $X'_\beta = (u \cdot h)^{-1}(x_\beta) = u^{-1}(x_{\alpha_1}) \union \ldots \union u^{-1}(x_{\alpha_s})
  = X_{\alpha_1} \union \ldots \union X_{\alpha_s}$.
  
  In order to complete the proof it suffices to show that $a'_j = a_{f(j)}$ for all $1 \le j \le |B|$.
  
  $(\subseteq)$:
  Take any $X'_\beta \subseteq a'_j$. Then $j \in D'_\beta$. Let $h^{-1}(x_\beta) = \{\alpha_1, \ldots, \alpha_s\}$. Then
  $D'_\beta = f^{-1}(D_{\alpha_1}) = \ldots = f^{-1}(D_{\alpha_s})$, whence $j \in f^{-1}(D_{\alpha_i})$ for all
  $1 \le i \le s$. Consequently, $f(j) \in D_{\alpha_i}$ for all $1 \le i \le s$, so
  $X_{\alpha_i} \subseteq a_{f(j)}$ for all $1 \le i \le s$. Finally, $X'_\beta = X_{\alpha_1} \union \ldots \union X_{\alpha_s}
  \subseteq a_{f(j)}$.
  
  $(\supseteq)$:
  Take any $X_\alpha \subseteq a_{f(j)}$. Then $f(j) \in D_\alpha$ whence $j \in f^{-1}(D_\alpha) = D'_\beta$
  so $X'_\beta \subseteq a'_j$. By the definition of $h$ we have that $h_\alpha = x_\beta$ whence $X_\alpha \subseteq X'_\beta$.
  Therefore, $X_\alpha \subseteq a'_j$.
\end{proof}

\begin{THM} \cite{van-the-metric-spaces}
  The category $\catU_S$ has the Ramsey property for every set $S \subseteq \RR$ of nonnegative reals.
  Consequently, the category $\catU$ has the Ramsey property.
\end{THM}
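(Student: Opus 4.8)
The plan is to reduce to the case of finite $S$ and there set up a pre-adjunction with the category $\catP$, which has the Ramsey property by Theorem~\ref{opos.thm.1}. First I would argue that it suffices to treat finite $S$, because the Ramsey property is \emph{local}: whenever $\calA \hookrightarrow \calB$ in $\catU_S$, the spectre $S' = \spec(\calB)$ is a finite subset of $S$, both $\calA$ and $\calB$ lie in the full subcategory $\catU_{S'}$, and since $\catU_{S'}$ is full the relevant hom-sets and hence the colourings coincide; so a witness $\calC \longrightarrow (\calB)^\calA_k$ produced in $\catU_{S'}$ (where $\spec(\calC) \subseteq S' \subseteq S$, so $\calC \in \Ob(\catU_S)$) is simultaneously a witness in $\catU_S$. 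Running the same argument with $\catU$ in place of $\catU_S$ yields the ``Consequently'' clause. Thus everything reduces to showing that $\catU_S$ has the Ramsey property for a fixed finite $S = \{s_1 < \cdots < s_N\}$.

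For such an $S$ I would exhibit a pre-adjunction $F : \Ob(\catU_S) \rightleftarrows \Ob(\catP) : G$ and invoke Theorem~\ref{opos.thm.main}. The key observation is that a convexly ordered ultrametric space is just an ordered tree: for $\calU = (U,d,\Boxed<) \in \catU_S$ and each level $j \in \{0,\dots,N-1\}$ the relation $d(x,y) \le s_j$ (with $s_0 := 0$) is an equivalence whose classes are the level-$j$ balls, and these balls form a laminar family of $<$-intervals. I would let $F(\calU)$ be the linearly ordered poset whose elements are all balls of radius $< s_N$, ordered by inclusion $\subseteq$ and linearly ordered by $\alex$ (which extends $\subseteq$ and orders disjoint balls left-to-right, exactly as in Theorem~\ref{opos.thm.1}). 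Dually, for $\calA = (A, \Boxed\sqsubseteq, \Boxed<) \in \Ob(\catP)$ I would let $G(\calA)$ be the space whose points are the weakly increasing $\sqsubseteq$-chains $\bar c = (c_0 \sqsubseteq \cdots \sqsubseteq c_{N-1})$ in $\calA$, with $d(\bar c, \bar c') = s_{k+1}$ where $k$ is the largest index at which $\bar c$ and $\bar c'$ differ (and $0$ if they agree). This ``longest common top'' distance is automatically an ultrametric valued in $\{0\} \union S$, and ordering the chains lexicographically from the top index downwards makes every ball an interval; verifying this convexity is the short but essential check that $G(\calA) \in \catU_S$.

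The transfer map $\Phi_{\calU,\calA} : \hom_\catP(F(\calU),\calA) \to \hom_{\catU_S}(\calU, G(\calA))$ would send a poset embedding $\iota$ to $\hat\iota : x \mapsto (\iota(B_0(x)), \dots, \iota(B_{N-1}(x)))$, where $B_j(x)$ is the level-$j$ ball of $x$. Since $\iota$ preserves and reflects both $\subseteq$ and $\alex$, a direct computation shows that for $x \ne y$ the largest index at which $\hat\iota(x)$ and $\hat\iota(y)$ differ is exactly $i-1$ when $d(x,y) = s_i$, so $\hat\iota$ is an isometry, and the $\alex$-comparison of the separating balls translates into the lexicographic comparison of the chains, so $\hat\iota$ preserves $<$; hence $\hat\iota$ is an embedding. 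For the requirement (PA) one is given $f : \calV \hookrightarrow \calU$ and must produce $v : F(\calV) \hookrightarrow F(\calU)$; the natural choice sends a level-$j$ ball $B$ of $\calV$ to the ambient $\calU$-ball $B^\calU_j(f(p))$ (any $p \in B$). Since $f$ is an isometric order-embedding this is a well-defined poset embedding satisfying $v(B^\calV_j(x)) = B^\calU_j(f(x))$, which gives $\Phi_{\calU,\calA}(\iota) \circ f = \Phi_{\calV,\calA}(\iota \cdot v)$ coordinate by coordinate.

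I expect the main obstacle to be organisational rather than deep: pinning down the linear orders so that the three structures are mutually compatible, namely the $\alex$-order on $F(\calU)$, the top-down lexicographic order on $G(\calA)$, and the convex order of $\calU$, and confirming convexity of balls in $G(\calA)$ under that order. Once the orders are aligned, the distance bookkeeping (the ``longest common top'' computation) and the verification of (PA) are routine, and the result drops out of Theorem~\ref{opos.thm.main} together with the Ramsey property of $\catP$.
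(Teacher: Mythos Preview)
Your proposal is correct and follows essentially the same approach as the paper: reduce to finite $S$ by locality, then build a pre-adjunction $F : \Ob(\catU_S) \rightleftarrows \Ob(\catP) : G$ with $F(\calU)$ the linearly ordered poset of balls under inclusion, $G(\calA)$ a space of tuples with the ``longest common top'' ultrametric and the anti-lexicographic order, and $\Phi$ sending $\iota$ to $x \mapsto (\iota(B_j(x)))_j$. The only cosmetic deviations are that the paper takes \emph{all} $k$-tuples in $A^{<k}$ rather than just $\sqsubseteq$-chains (your restriction is harmless since the image of every $\hat\iota$ lands in chains), and the paper linearly orders balls by radius first and then by centre rather than by $\alex$; both extensions of $\subseteq$ work, but you should be careful that balls are treated as formal (level, class) pairs rather than as bare point-sets, or else $v$ need not be injective.
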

\begin{proof}
  In order to show that $\catU_S$ has the Ramsey property for every set $S \subseteq \RR$ of nonnegative reals
  it suffices to show that the category $\catU_{S'}$ has the Ramsey property for every finite set $S' \subseteq \RR$
  of nonnegative reals. Namely, assume that $\catU_{S'}$ has the Ramsey property for every finite $S'$.
  Take any set $S \subseteq \RR$ of nonnegative reals,
  any $\calU, \calV \in \Ob(\catU_S)$ such that $\calU \hookrightarrow \calV$, and any $k \ge 2$.
  Since $\calV$ is finite, $S' = \spec(\calV)$ is a finite subset of $S$ and $\calU, \calV \in \Ob(\catU_{S'})$ because
  $\spec(\calU) \subseteq \spec(\calV)$.
  The category $\catU_{S'}$ has the Ramsey property by the assumption, so
  there is a $\calW \in \Ob(\catU_{S'})$ such that $\calW \longrightarrow (\calV)^\calU_k$.
  But $\catU_{S'}$ is a subcategory of $\catU_S$, whence $\calW \in \Ob(\catU_S)$.

  Now, take any \emph{finite} set $S \subseteq \RR$ of nonnegative reals.
  In order to show that $\catU(S)$ has the Ramsey property
  it suffices to establish a pre-adjunction
  $$
    F : \Ob(\catU_S) \rightleftarrows \Ob(\catP) : G
  $$
  since $\catP$ has the Ramsey property (Theorem~\ref{opos.thm.main}).

  Let us construct one such pre-adjunction. Let $S = \{0 = s_0 < s_1 < \ldots < s_k \} \subseteq \RR$.
  For $\calU = (U, d, \Boxed<) \in \Ob(\catU_S)$ put $B_\calU = \{B(x, s) : x \in U, s \in S\}$
  where $B(x, s) = \{y \in U : d(x, y) \le s\}$. Let us order the balls in $B_\calU$ as follows:
  $$
    B(x, s_i) \prec B(y, s_j) \text{ if and only if } s_i < s_j, \text{ or } s_i = s_j \text{ and } x < y.
  $$
  Note that $\prec$ is a linear ordering of $B_\calU$ because in an ultrametric space every point in a ball can serve as the
  center of the ball, and because $\calU$ is a convexly ordered ultrametric space.
  Now let
  $$
    F(\calU) = (B_\calU, \Boxed\subseteq, \Boxed\prec).
  $$
  Clearly, $(B_\calU, \Boxed\subseteq)$ is a poset, and we have just seen that $\prec$ is a linear ordering of $B_\calU$.
  It is easy to see that $\prec$ extends $\subseteq$: if $B(x, s_i) \subset B(y, s_j)$ then $s_i < s_j$ whence
  $B(x, s_i) \prec B(y, s_j)$. Therefore, $(B_\calU, \Boxed\subseteq, \Boxed\prec)$ is a finite linearly ordered poset and the
  definition of $F$ is correct.

  For $\calA = (A, \Boxed\sqsubseteq, \Boxed\prec) \in \Ob(\catP)$ put
  $$
    A^{<k} = \{(a_0, a_1, \ldots, a_{k-1}) : a_i \in A, 0 \le i \le k - 1 \}.
  $$
  (By this notation we want to stress that the tuples in $A^{<k}$ are indexed by $0, 1, \ldots, k - 1$.)
  Tuples $(a_0, a_1, \ldots, a_{k-1}), (b_0, b_1, \ldots, b_{k-1}), \ldots \in A^{<k}$ will be abbreviated as
  $\overline a$, $\overline b$, \ldots, respectively.
  Define $d_\calA : (A^{<k})^2 \to S$ by $d_\calA(\overline a, \overline b) = s_j$ where
  $$
    j = \min\{p \in \{0, 1, \ldots, k-1\} : (\forall i \ge p) a_i = b_i \},
  $$
  and we assume that $\min \0 = k$. Next, put $\overline a \precalex \overline b$ if and only if there is a $j$ such that
  $a_j \prec b_j$ and $(\forall i > j)a_i = b_i$. Finally, let
  $$
    G(\calA) = (A^{<k}, d_\calA, \precalex).
  $$
  
  \medskip
  
  Claim. $(A^{<k}, d_\calA, \precalex)$ is a convexly ordered ultrametric space with distances in $S$.
  
  Proof. It is clear that $d_\calA(\overline a, \overline b) \in S$ for all
  $\overline a, \overline b \in A^{<k}$ and that $\precalex$ is a linear ordering of $A^{<k}$.
  In order to show that $d_\calA$ is an ultrametric we will just demonstrate the triangle inequality, as the other axioms are
  obvious. Take any $\overline a, \overline b, \overline c \in A^{<k}$
  and let us show that $d_\calA(\overline a, \overline c) \le \max\{d_\calA(\overline a, \overline b), d_\calA(\overline b, \overline c)\}$.
  If $d_\calA(\overline a, \overline b) = s_k$ or $d_\calA(\overline b, \overline c) = s_k$ the inequality is trivially true. Assume,
  therefore, that $d_\calA(\overline a, \overline b) = s_i < s_k$ and $d_\calA(\overline b, \overline c) = s_j < s_k$.
  If $i \ge j$ then $a_\ell = b_\ell = c_\ell$ for $\ell \ge i$ whence
  $d_\calA(\overline a, \overline c) \le s_i = \max\{s_i, s_j\}$. The case $i < j$ is analogous.

  It still remains to be shown that every ball in $(A^{<k}, d_\calA)$ is convex with respect to $\precalex$. Let $\beta$ be a ball
  in $(A^{<k}, d_\calA)$. Take any $\overline a, \overline b \in \beta$ and a $\overline c \in A^{<k}$
  such that $\overline a \precalex \overline c \precalex \overline b$. Let $d_\calA(\overline a, \overline b) = s_i$.
  
  If $s_i = s_k$ then $A^{<k} = B(\overline a, s_i) = \beta$ so $\overline c \in \beta$ trivially. Assume, therefore,
  that $s_i < s_k$. From $\overline a \precalex \overline c$ it follows that there is a $t$ such that $a_t \prec c_t$
  and $a_j = c_j$ for all $j > t$. Let us show that $t < i$. Suppose, to the contrary, that $t \ge i$.
  Then $a_j = c_j = b_j$ for $j > t$. Since $a_t \prec c_t$ and $a_t = b_t$, it follows that
  $b_t \prec c_t$. Therefore, $\overline b \precalex \overline c$. Contradiction. So, $t < i$.
  
  Now, from $t < i$ it follows that $a_j = c_j$ for $j \ge i$, whence $d_\calA(\overline a, \overline c) \le s_i$.
  Therefore, $\overline c \in B(\overline a, s_i) \subseteq \beta$.
  This completes the proof of the claim.
  
  \medskip
  
  For $\calU = (U, d, \Boxed<) \in \Ob(\catU_S)$ and $\calA = (A, \Boxed\sqsubseteq, \Boxed\prec) \in \Ob(\catP)$
  let us define
  $$
    \Phi_{\calU, \calA} : \hom_{\catP}(F(\calU), \calA) \to \hom_{\catU_S}(\calU, G(\calA))
  $$
  as follows. For $u : F(\calU) \hookrightarrow \calA$ let $\hat u = \Phi_{\calU, \calA}(u) : U \to A^{<k}$ be defined by
  $$
    \hat u (x) = (u(B(x, s_0)), u(B(x, s_1)), \ldots, u(B(x, s_{k - 1}))).
  $$
  To show that the definition of $\Phi$ is correct we have to show that for every $u : F(\calU) \hookrightarrow \calA$ 
  the mapping $\hat u$ is an embedding $\calU \hookrightarrow G(\calA)$.

  To start with, note that $\hat u$ is injective: if $\hat u(x) = \hat u(y)$ then $u(B(x, s_0)) = u(B(y, s_0))$, whence
  $x = y$ having in mind that $u$ is injective and that $s_0 = 0$.

  It easy to show that $d(x, y) = d_\calA(\hat u(x), \hat u(y))$. Let $d(x, y) = s_i$. 
  Then $B(x, s_j) = B(y, s_j)$ for all $j \ge i$ and $B(x, s_{i-1}) \ne B(y, s_{i-1})$. Therefore,
  $d_\calA(\hat u(x), \hat u(y)) = s_i$ by definition.
  
  Finally, let us show that $x < y$ implies $\hat u(x) \precalex \hat u(y)$. Since $x \ne y$ we have that
  $\hat u(x) \ne \hat u(y)$, so there is an $i$ such that $u(B(x, s_i)) \ne u(B(y, s_i))$, or, equivalently,
  $B(x, s_i) \ne B(y, s_i)$. Assume that $i$ is the largest such index so that $u(B(x, s_j)) = u(B(y, s_j))$ for all $j > i$.
  Since $\calU$ is an ultrametric space it follows that $B(x, s_i) \sec B(y, s_i) = \0$, whence
  $B(x, s_i) \prec B(y, s_i)$ by definition of $\prec$. But then $u(B(x, s_i)) \prec u(B(y, s_i))$ because $u$ is an
  embedding. This, together with $u(B(x, s_j)) = u(B(y, s_j))$ for all $j > i$ yields $\hat u(x) \precalex \hat u(y)$.
  
  So, the definition of $\Phi$ is correct. We still have to show that this family of maps
  satisfies the requirement~(PA) of Definition~\ref{opos.def.PA}.

  Let $\calU' = (U', d', \Boxed<)$ be a linearly ordered ultrametric space that embeds into $\calU$ and let $f : \calU' \hookrightarrow \calU$
  be an embedding. Define $v : B_{\calU'} \to B_\calU$ by
  $$
    v(B(x, s_i)) = B(f(x), s_i).
  $$
  Because $\calU$ and $\calU'$ are ultrametric spaces and because $f$ is an embedding it follows immediately that
  $v$ does not depend on the choice of the center of the ball and that it is injective.
  
  Let us show that $v$ is an embedding $F(\calU') \hookrightarrow F(\calU)$.
  Assume, first, that $B(x, s_i) \subseteq B(y, s_j)$. Then $s_i \le s_j$ and $d'(x, y) \le s_j$, whence follows
  immediately that $B(f(x), s_i) \subseteq B(f(y), s_j)$. (If $z \in B(f(x), s_i)$ then $d(z, f(x)) \le s_i$, so
  $d(z, f(y)) \le \max\{d(z, f(x)), d(f(x), f(y))\} \le s_j$.) Conversely, if
  $B(f(x), s_i) \subseteq B(f(y), s_j)$ then $s_i \le s_j$ and $d(f(x), f(y)) \le s_j$, whence follows
  that $B(x, s_i) \subseteq B(y, s_j)$ because $f$ is an embedding.
  
  Assume now that $B(x, s_i) \prec B(y, s_j)$. Then $s_i < s_j$, or $s_i = s_j$ and $x < y$. If $s_i < s_j$ then
  $B(f(x), s_i) \prec B(f(y), s_j)$ by definition of~$\prec$. If, however, $s_i = s_j$ and $x < y$ then
  $f(x) < f(y)$ and again $B(f(x), s_i) \prec B(f(y), s_j)$ by definition of~$\prec$.
  
  Finally, let us show that $\Phi_{\calU, \calA}(u) \circ f = \Phi_{\calU', \calA}(u \circ v)$. Put $\hat u = \Phi_{\calU, \calA}(u)$
  and $\widehat{u \circ v} = \Phi_{\calU', \calA}(u \circ v)$. Then
  \begin{align*}
    \widehat{u \circ v}(x)
      &= \big(  u \circ v(B(x, s_0)), u \circ v(B(x, s_1)), \ldots, u \circ v(B(x, s_{k - 1}))  \big)\\
      &= \big(  u(B(f(x), s_0)), u(B(f(x), s_1)), \ldots, u(B(f(x), s_{k - 1}))  \big)\\
      &= \hat u (f(x)) = \hat u \circ f (x).
  \end{align*}
  This completes the proof.
\end{proof}

As the final demonstration of this strategy we shall show that the class of all finite linearly ordered metric spaces
has the Ramsey property.

Let $T = \{0 = t_0 < t_1 < \ldots < t_\ell \} \subseteq \RR$ be a finite set of nonnegative reals. We say that $T$ is \emph{tight}
if $t_{i + j} \le t_i + t_j$ for all $0 \le i \le j \le i + j \le \ell$.

\begin{LEM}\label{opos.lem.tight}
  Let $(A, \Boxed+)$ be a subsemigroup of the additive semigroup $(\RR, \Boxed+)$ such that $0 \in A \ne \{0\}$.
  For every finite set $S = \{0 = s_0 < s_1 < \ldots < s_k \} \subseteq A$ there exists a finite tight set
  $T = \{0 = t_0 < t_1 < \ldots < t_\ell \} \subseteq A$ such that $S \subseteq T$, $t_1 = s_1$ and $t_\ell = s_k$.
\end{LEM}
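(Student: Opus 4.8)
The plan is to take $T$ to be the positive part, truncated at $s_k$, of the subsemigroup of $A$ generated by $S$. Concretely, let $\langle S\rangle$ denote the set of all finite sums of elements of $S$ (the empty sum being $0$); since $S \subseteq A$, $A$ is closed under $+$ and $0 \in A$, we have $\langle S\rangle \subseteq A$. Put $T = \langle S\rangle \cap [0, s_k]$. First I would record the easy properties. Every positive element of $\langle S\rangle$ is a sum of elements of $S$ each of which is $\ge s_1$, so every positive element of $\langle S\rangle$ is $\ge s_1$; as $s_1$ itself occurs, the least positive element of $T$ is $s_1$, i.e. $t_1 = s_1$. Since $s_k \in \langle S\rangle$ and we truncate at $s_k$, we get $\max T = s_k$, i.e. $t_\ell = s_k$. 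Finiteness of $T$ holds because a sum of elements of $S$ bounded by $s_k$ has at most $s_k/s_1$ summands, so there are only finitely many such sums; and clearly $S \subseteq T \subseteq A$.

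The real content is tightness, and here I would first reformulate it. For $v \in \RR$ write $c(v) = |T \cap (0, v]|$ for the number of positive elements of $T$ not exceeding $v$, so that $c(t_i) = i$ for every $i$. Fix $i, j$ with $i + j \le \ell$; the cases $i = 0$ or $j = 0$ are trivial, so assume $i, j \ge 1$ and set $x = t_i$, $y = t_j$. Then $t_{i+j} \le t_i + t_j$ holds precisely when at least $i+j$ positive elements of $T$ lie in $(0, x+y]$, that is, when $c(x+y) \ge i + j = c(x) + c(y)$. When $x + y > s_k$ this is automatic, since then $c(x+y) = \ell \ge i+j$ is just the range condition; so the only substantive case is $x + y \le s_k$. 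Thus tightness reduces to the superadditivity statement $c(x+y) \ge c(x) + c(y)$ for all positive $x, y \in T$ with $x + y \le s_k$.

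To establish this superadditivity I would exhibit $c(x) + c(y)$ distinct positive elements of $\langle S\rangle$ lying in $(0, x+y]$. Take the $c(x)$ positive elements of $T$ that are $\le x$, together with the $c(y)$ translates $x + b$, where $b$ ranges over the positive elements of $T$ that are $\le y$. Each such $x + b$ lies in $\langle S\rangle$ (as $x, b \in \langle S\rangle$) and satisfies $x < x + b \le x + y \le s_k$, so $x + b \in T$; the first family lies in $(0, x]$ and the second in $(x, x+y]$, so the two families are disjoint and each is internally distinct. This produces $c(x) + c(y)$ distinct elements of $T \cap (0, x+y]$, whence $c(x+y) \ge c(x) + c(y)$. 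Unwinding the reformulation then gives $t_{i+j} \le t_i + t_j$, so $T$ is tight and the lemma follows.

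I expect the main obstacle to be conceptual rather than computational: several natural guesses fail, and one must see why. For instance, padding $S$ with merely the multiples of $s_1$ need not yield a tight set, because inserting points at low indices can make an early value too small and so violate an inequality such as $t_4 \le 2t_2$. The decisive insight is that closing $S$ under addition supplies exactly the intermediate points needed to make the counting function $c$ superadditive; once tightness is recast as this superadditivity of $c$, the verification is the short translation argument above.
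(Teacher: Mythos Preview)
Your proof is correct and takes a genuinely different route from the paper. The paper builds $T$ iteratively: starting from $t_0 = 0$, $t_1 = s_1$, at each step it sets $t_{i+1}$ to be the minimum of the next unreached element of $S$ and $m_{i+1} = \min\{t_\alpha + t_\beta : \alpha + \beta = i+1,\; 1 \le \alpha \le \beta\}$, then proves separately that the sequence is strictly increasing and that the procedure terminates (via a lower bound $\delta$ on successive gaps). Your construction instead takes the additive closure $\langle S\rangle$ truncated at $s_k$ in one stroke, and your recasting of tightness as superadditivity of the counting function $c$ together with the translation $b \mapsto x + b$ is a clean one-line verification. The two constructions can produce different sets (for $S = \{0,2,3,10\}$ the paper gives $\{0,2,3,5,6,8,9,10\}$ while you get $\{0,2,3,4,5,6,7,8,9,10\}$), so the paper's $T$ may be smaller; but your argument is shorter, more conceptual, and avoids the termination analysis entirely.
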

\begin{proof}
  Let $S = \{0 = s_0 < s_1 < \ldots < s_k \} \subseteq A$ be a finite set.
  Let us construct a sequence $t_0, t_1, t_2, \ldots$ of reals as follows. Let
  $t_0 = s_0 = 0$ and $t_1 = s_1$.
  Assume that we have constructed $t_0, t_1, \ldots, t_i$ and let
  $$
    \{t_0, t_1, \ldots, t_i\} \sec S = \{s_0, s_1, \ldots, s_j\}.
  $$
  If $j = k$ we stop with the construction. If, however, $j < k$ let
  $$
    m_{i+1} = \min\{ t_\alpha + t_\beta : \alpha + \beta = i + 1, 1 \le \alpha \le \beta \}
  $$
  and
  $$
    t_{i+1} = \begin{cases}
      s_{j+1}, & s_{j+1} \le m_{i + 1},\\
      m_{i + 1}, & s_{j + 1} > m_{i + 1}.
    \end{cases}
  $$

  Clearly, $\{t_0, t_1, t_2, \ldots\} \subseteq A$,
  $t_1 = s_1$, and $t_i \le m_i \le t_\alpha + t_\beta$ whenever $\alpha + \beta = i$, $1 \le \alpha \le \beta$.

  \medskip

  Next, let us show that $t_{i+1} > t_i$ for all $i \ge 0$. We proceed by induction. The first step $t_1 > t_0$ is obvious.
  Assume that $t_j > t_{j - 1}$ for all $j \le i$.

  Case 1: $t_{i + 1} = m_{i + 1}$. Take any $\alpha$ and  $\beta$ such that $\alpha + \beta = i + 1$ and $1 \le \alpha \le \beta$.
  Since $\beta \le i$, by the induction hypothesis we have that $t_\beta > t_{\beta - 1}$ whence
  $t_\alpha + t_\beta > t_\alpha + t_{\beta - 1} \ge t_{\alpha + \beta - 1} = t_i$. Therefore,
  $$
    m_{i+1} = \min\{ t_\alpha + t_\beta : \alpha + \beta = i + 1, 1 \le \alpha \le \beta \} > t_i.
  $$

  Case 2: $t_{i + 1} = s_{j + 1}$. If $t_i = s_j$ then $t_{i + 1} > t_i$ because $s_{j + 1} > s_j$. Assume, therefore,
  that $t_i \ne s_j$. Then $t_i = m_i < s_{j + 1}$ by construction.

  Therefore, $t_{i+1} > t_i$ for all $i \ge 0$.

  \medskip

  Finally, let us show that the procedure stops, whence follows that $t_\ell = s_k$ and that $S \subseteq T$.

  By construction $t_1 = s_1$. Assume that $s_j = t_i$ for some $j$ and $i$ and let us show that
  $s_{j + 1} = t_{n}$ for some $n > i$. Seeking a contradiction, suppose this is not the case.
  Then, by construction, $t_{i + \beta} = m_{i + \beta} < s_{j + 1}$ for all $\beta \in \NN$. Let
  $$
    \delta = \min \{t_\alpha - t_{\alpha - 1} : 1 \le \alpha \le i \}.
  $$
  Note that $\delta > 0$ because, as we have just seen, $t_\alpha > t_{\alpha - 1}$ for all $\alpha \ge 1$.
  Let us show that $t_{i + 1} - t_i \ge \delta$. Clearly,
  \begin{align*}
    t_{i + 1} - t_i &= \min \{ t_\alpha + t_{i + 1 - \alpha} : 1 \le \alpha \le i \} - t_i\\
                    &= \min \{ t_\alpha + t_{i + 1 - \alpha} - t_i : 1 \le \alpha \le i \}.
  \end{align*}
  Having in mind that $t_{i + 1 - \alpha} \ge t_{i - \alpha} + \delta$ and that $t_\alpha + t_{i - \alpha} \ge t_i$
  we obtain:
  \begin{align*}
    t_\alpha + t_{i + 1 - \alpha} - t_i  &\ge t_\alpha + t_{i - \alpha} + \delta - t_i \\
                                         &\ge t_i + \delta - t_i = \delta,
  \end{align*}
  whence
  $$
    t_{i + 1} - t_i = \min \{ t_\alpha + t_{i + 1 - \alpha} - t_i : 1 \le \alpha \le i \} \ge \delta.
  $$
  By the same argument we have that $t_{i + 2} - t_{i + 1} \ge \delta$, $t_{i + 3} - t_{i + 2} \ge \delta$, and so on.
  Therefore, $t_{i + \beta} \ge s_j + \beta \cdot \delta$ for all $\beta \in \NN$. This contradicts the assumption that
  $t_{i + \beta} < s_{j + 1}$ for all $\beta \in \NN$.
  
  Therefore, there is an $n > i$ such that $t_n = s_{j + 1}$. Eventually, the procedure stops with $t_\ell = s_k$.
\end{proof}

\begin{THM}\label{opos.thm.met}
  $(a)$ The category $\catM_S$ has the Ramsey property for every finite
  tight set $S = \{0 = s_0 < s_1 < \ldots < s_k \} \subseteq \RR$.

  $(b)$ Let $(A, \Boxed+)$ be a subsemigroup of the additive semigroup $(\RR, \Boxed+)$
  such that $0 \in A$ and let $I$ be an arbitrary interval of reals.
  Then $\catM_{\{0\} \union (I \sec A)}$ has the Ramsey property.

  $(c)$ \cite{Nesetril-metric} The categories $\catM$, $\catM_\QQ$ and $\catM_\ZZ$ have the Ramsey property.
\end{THM}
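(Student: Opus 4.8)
The plan is to prove the three parts in order, with part $(a)$ carrying all of the technical weight and parts $(b)$, $(c)$ following formally. For $(a)$ I would mimic the construction of the preceding theorem for convexly ordered ultrametric spaces, replacing the ultrametric target by a genuine metric one. Concretely, fix a tight $S = \{0 = s_0 < \ldots < s_k\}$ and establish a pre-adjunction $F : \Ob(\catM_S) \rightleftarrows \Ob(\catP) : G$, so that the Ramsey property of $\catM_S$ follows from Theorem~\ref{opos.thm.main} together with the Ramsey property of $\catP$ (Theorem~\ref{opos.thm.1}). For $\calM = (M, d, \Boxed<) \in \Ob(\catM_S)$ let $F(\calM) = (B_\calM, \Boxed\subseteq, \Boxed\prec)$ be the poset of balls $B(x, s_i)$ ordered by inclusion, with $\prec$ the radius-then-center order as before; here balls may properly overlap rather than being nested or disjoint, so some additional care is needed to see that $\prec$ is a well-defined linear extension of $\subseteq$, but this is handled as in the ultrametric case. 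For a linearly ordered poset $\calA = (A, \Boxed\sqsubseteq, \Boxed\prec)$ let $G(\calA)$ be a metric space whose points encode ``ball chains'' over $\calA$ (tuples $(a_0, \ldots, a_{k-1})$ with $a_i$ in the role of the radius-$s_i$ ball), ordered by $\precalex$, equipped with a distance $d_\calA$ defined through the order $\sqsubseteq$ of $\calA$ so that on the image tuples $\hat u(x) = (u(B(x, s_0)), \ldots, u(B(x, s_{k-1})))$ it returns the true distance $d(x, y) = \min\{s_i : u(B(y, s_0)) \sqsubseteq u(B(x, s_i))\}$ (with the convention $\min \0 = s_k$).

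The main obstacle is to define $d_\calA$ so that $G(\calA)$ is a \emph{genuine} metric space with $\spec(G(\calA)) \subseteq S$ for \emph{every} poset $\calA$, and in particular to verify the triangle inequality on all of $G(\calA)$ and not merely on image tuples. In the ultrametric theorem the distance was read off as the least radius beyond which two tuples agree, and the ultrametric inequality came for free from the max-rule for common suffixes; for ordinary metric spaces no such max-rule is available. If $d_\calA(\overline a, \overline b) = s_p$ and $d_\calA(\overline b, \overline c) = s_q$, the best one can hope for is $d_\calA(\overline a, \overline c) \le s_p + s_q$, and the whole reason for restricting to a tight $S$ is that then $s_{p+q} \le s_p + s_q$, so that the scale indices behave subadditively and the additive triangle inequality becomes a statement purely about the indices $0, 1, \ldots, k$. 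Thus tightness is exactly the hypothesis needed to control triangles, and the heart of the argument is the verification that, with the correct definition of $d_\calA$, this subadditivity yields the triangle inequality everywhere. Once $G(\calA) \in \Ob(\catM_S)$ is established, the remaining checks---that each $\hat u$ is an embedding preserving both $d$ and $<$, and that requirement (PA) of Definition~\ref{opos.def.PA} holds via $v(B(x, s_i)) = B(f(x), s_i)$ for an embedding $f : \calM' \hookrightarrow \calM$---should run just as in the ultrametric case.

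Part $(b)$ I would deduce from $(a)$ by the same bootstrap used at the start of the $\catU_S$ proof, now with Lemma~\ref{opos.lem.tight} supplying the enveloping spectrum. Given $\calU \hookrightarrow \calV$ in $\catM_{\{0\} \union (I \sec A)}$ and $k \ge 2$, the space $\calV$ is finite, so $S' = \spec(\calV)$ is a finite subset of $\{0\} \union (I \sec A)$; writing $S' = \{0 = s_0 < \ldots < s_m\}$ and discarding the trivial case $S' = \{0\}$, apply Lemma~\ref{opos.lem.tight} to the semigroup $A$ to get a finite tight $T \subseteq A$ with $S' \subseteq T$, $t_1 = s_1$ and $t_\ell = s_m$. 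Every nonzero element of $T$ then lies in $[s_1, s_m] \subseteq I$ and in $A$, so $T \subseteq \{0\} \union (I \sec A)$ and $\catM_T$ is a full subcategory of $\catM_{\{0\} \union (I \sec A)}$ containing both $\calU$ and $\calV$. By $(a)$ there is a $\calW \in \Ob(\catM_T)$ with $\calW \longrightarrow (\calV)^\calU_k$, and $\calW \in \Ob(\catM_{\{0\} \union (I \sec A)})$, as required.

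Finally, part $(c)$ falls out as three instances of $(b)$. Taking $A = \RR$ and $I = [0, \infty)$ gives $\{0\} \union (I \sec A) = [0, \infty)$ and hence $\catM$; taking $A = \QQ$ and $I = [0, \infty)$ gives the nonnegative rationals and hence $\catM_\QQ$; and taking $A = \ZZ$ and $I = [0, \infty)$ gives $\NN$ and hence $\catM_\ZZ$. In each case $(A, \Boxed+)$ is a subsemigroup of $(\RR, \Boxed+)$ containing $0$, so $(b)$ applies. The entire difficulty of the theorem is therefore concentrated in the triangle-inequality verification of part $(a)$, which is precisely what the tightness condition---with Lemma~\ref{opos.lem.tight} as its companion---is engineered to make possible.
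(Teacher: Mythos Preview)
Your overall architecture is right, and your treatments of $(b)$ and $(c)$ match the paper's almost verbatim. The gap is in part~$(a)$: carrying over the ultrametric construction by taking $F(\calM) = (B_\calM, \subseteq, \prec)$ does \emph{not} work for general metric spaces, and ``handled as in the ultrametric case'' hides a real obstruction.

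First, in a non-ultrametric space distinct labels $(x,s_i)$ can give the same ball as a set (already $B(x,s_i)=B(x,s_j)$ is common), so $B_\calM$ collapses information and the radius-then-center order $\prec$ is not well defined on $B_\calM$. More importantly, even if you pass to labelled balls $M\times S$ to rescue well-definedness, ball inclusion is not intrinsic to the two balls in question; it depends on the ambient space. Concretely, take $\calM'=\{a,b\}$ with $d'(a,b)=1$ and $S=\{0,1,2\}$: then $B(a,2)\subseteq B(a,1)$ in $\calM'$ (both equal $\{a,b\}$). Embed $\calM'$ into $\calM=\{a,b,c\}$ with $d(a,b)=1$, $d(a,c)=d(b,c)=2$; now $B(f(a),2)=\{a,b,c\}\not\subseteq\{a,b\}=B(f(a),1)$. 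Hence your proposed $v\big(B(x,s_i)\big)=B(f(x),s_i)$ is either ill defined (balls as sets) or fails to be a poset embedding (labelled balls), so requirement~(PA) cannot be verified along these lines. The same example shows that your one-sided recipe for $d_\calA$ via $\min\{s_i: u(B(y,s_0))\sqsubseteq u(B(x,s_i))\}$ cannot be extended to all of $A^{<k}$ in a way that both satisfies the triangle inequality and is preserved/reflected by the $\hat u$'s.

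The paper's fix is to abandon ball inclusion and work additively: set $F(\calM)=\big(M\times\{0,\ldots,k\},\sqsubseteq,\prec\big)$ with $(x,i)\sqsubseteq(y,j)$ iff $i\le j$ and $d(x,y)\le s_j-s_i$ (which implies, but is strictly stronger than, $B(x,s_i)\subseteq B(y,s_j)$), and $\prec$ lexicographic on $(i,x)$. Then $v(x,i)=(f(x),i)$ is trivially a poset embedding because the relation is expressed purely through $d$. On the $G$ side the distance is the ``sliding'' two-sided test
\[
  d_\calA(\overline a,\overline b)=s_j,\quad j=\min\big\{p:\ (\forall i\le k-1-p)\ a_i\sqsubseteq b_{i+p}\ \text{and}\ b_i\sqsubseteq a_{i+p}\big\},
\]
and the linear order on $A^{<k}$ is $\preclex$ (lexicographic), not $\precalex$. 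Tightness of $S$ then gives $d_\calA(\overline a,\overline c)\le s_{p+q}\le s_p+s_q$ exactly as you anticipated. So your intuition about where tightness enters is correct; what needs to change is the definition of $F$ (and correspondingly of $d_\calA$ and the order on $A^{<k}$).
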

\begin{proof}
  $(a)$
  Let $S = \{0 = s_0 < s_1 < \ldots < s_k \} \subseteq \RR$ be a tight set.
  In order to show that $\catM_S$ has the Ramsey property it suffices to establish a pre-adjunction
  $$
    F : \Ob(\catM_S) \rightleftarrows \Ob(\catP) : G
  $$
  since $\catP$ has the Ramsey property (Theorem~\ref{opos.thm.main}).

  Take any $k \ge 1$. For $\calM = (M, d, \Boxed<) \in \Ob(\catM_S)$ put
  $$
    F(\calM) = (M \times \{0, 1, \ldots, k\}, \Boxed\sqsubseteq, \Boxed\prec),
  $$
  where
  $$
    (x, i) \sqsubseteq (y, j) \text{ if and only if } i \le j \text{ and } d(x, y) \le s_j - s_i,
  $$
  and
  $$
    (x, i) \prec (y, j) \text{ if and only if } i < j, \text{ or } i = j \text{ and } x < y.
  $$
  It is easy to see that $(x, i) \sqsubseteq (y, j)$ implies that $B(x, s_i) \subseteq B(y, s_j)$, so that,
  loosely speaking, the poset $(M \times \{0, 1, \ldots, k\}, \Boxed\sqsubseteq)$ corresponds to the poset
  formed by the balls of $M$ under inclusion.

  It is obvious that $\sqsubseteq$ is reflexive and transitive, and it is antisymmetric because $(x, i) \sqsubseteq (y, j)$
  and $i = j$ imply $x = y$ (since $d(x, y) \le s_j - s_i = 0$). It is also easy to see that $\prec$ extends $\sqsubseteq$:
  if $(x, i) \sqsubset (y, j)$ then $i < j$ (because $(x, i) \sqsubseteq (y, j)$
  and $i = j$ imply $x = y$), whence $(x, i) \prec (y, j)$ by definition.
  Therefore, $(M \times \{0, 1, \ldots, k\}, \Boxed\sqsubseteq, \Boxed\prec)$ is a linearly ordered poset and the definition
  of $F$ is correct.
    
  As in the case of ultrametric spaces, for $\calA = (A, \Boxed\sqsubseteq, \Boxed\prec) \in \Ob(\catP)$ put
  $$
    A^{<k} = \{(a_0, a_1, \ldots, a_{k-1}) : a_i \in A, 0 \le i \le k - 1\}.
  $$
  Define $d_\calA : (A^{<k})^2 \to S$ as follows:
  $$
    d_\calA(\overline a, \overline b) = s_j
  $$
  where
  $$
    j = \min\{ p \in \{0, 1, \ldots, k-1\} : (\forall i \le k - 1 - p)(a_i \sqsubseteq b_{i + p} \land b_i \sqsubseteq a_{i + p}) \},
  $$
  and $\min \0 = k$. Next, put $\overline a \preclex \overline b$ if and only if there is a $j$ such that
  $a_j \prec b_j$ and $(\forall i < j)(a_i = b_i)$. Finally, let
  $$
    G(\calA) = (A^{<k}, d_\calA, \preclex).
  $$
  
  \medskip
  
  Claim. $(A^{<k}, d_\calA, \preclex)$ is a linearly ordered metric space with distances in $S$.
  
  Proof. It is clear that $d_\calA(\overline a, \overline b) \in S$ for all
  $\overline a, \overline b \in A^{<k}$ and that $\preclex$ is a linear ordering of $A^{<k}$. Let us show that $d_\calA$
  is a metric.
  
  Clearly, $d_\calA(\overline a, \overline a) = s_0 = 0$ and $d_\calA(\overline a, \overline b) = d_\calA(\overline b, \overline a)$
  for all $\overline a, \overline b \in A^{<k}$. Assume that $d_\calA(\overline a, \overline b) = 0$. Then
  $(\forall i \le k - 1)(a_i \sqsubseteq b_{i} \land b_i \sqsubseteq a_{i})$, whence $\overline a = \overline b$ because
  $\sqsubseteq$ is antisymmetric.
  
  Finally, let us show that $d_\calA(\overline a, \overline c) \le d_\calA(\overline a, \overline b) + d_\calA(\overline b, \overline c)$.
  Let $d_\calA(\overline a, \overline b) = s_p$ and $d_\calA(\overline b, \overline c) = s_q$. If $p + q \ge k$
  then $d_\calA(\overline a, \overline c) \le s_k \le s_p + s_{k - p} \le s_p + s_q$ because $S$ is tight and
  $k - p \le q \Rightarrow s_{k - p} \le s_q$.
  Assume, therefore, that $p + q \le k - 1$. Then
  $$
    (\forall i \le k - 1 - p)(a_i \sqsubseteq b_{i + p} \land b_i \sqsubseteq a_{i + p}), \text{ and}
  $$
  $$
    (\forall i \le k - 1 - q)(b_i \sqsubseteq c_{i + q} \land c_i \sqsubseteq b_{i + q}),
  $$
  whence
  $$
    (\forall i \le k - 1 - (p + q))(a_i \sqsubseteq c_{i + p + q} \land c_i \sqsubseteq a_{i + p + q}).
  $$
  Therefore, $d_\calA(\overline a, \overline c) \le s_{p + q} \le s_p + s_q$ because $S$ is tight. This completes the proof of the claim.
  
  \medskip
  
  For $\calM = (M, d, \Boxed<) \in \Ob(\catM_S)$ and $\calA = (A, \Boxed\sqsubseteq, \Boxed\prec) \in \Ob(\catP)$
  let us define
  $$
    \Phi_{\calM, \calA} : \hom_{\catP}(F(\calM), \calA) \to \hom_{\catM_S}(\calM, G(\calA))
  $$
  as follows. For $u : F(\calM) \hookrightarrow \calA$ let $\hat u = \Phi_{\calM, \calA}(u) : M \to A^{<k}$ be defined by
  $$
    \hat u (x) = (u(x, 0), u(x, 1), \ldots, u(x, k - 1)).
  $$
  To show that the definition of $\Phi$ is correct we have to show that for every $u : F(\calM) \hookrightarrow \calA$ 
  the mapping $\hat u$ is an embedding $\calM \hookrightarrow G(\calA)$.
  
  To start with, note that $x < y$ implies $\hat u(x) \preclex \hat u(y)$ straightforwardly:
  $x < y$ implies $(x, 0) \prec (y, 0)$ in $F(\calM)$, whence $u(x, 0) \prec u(y, 0)$ in $\calA$, so
  $\hat u(x) \preclex \hat u(y)$. Let us now show that $d(x, y) = d_\calA(\hat u(x), \hat u(y))$.
  
  Case 1: $d(x, y) = 0$. Then $\hat u(x) = \hat u(y)$, so $d_\calA(\hat u(x), \hat u(y)) = 0$.
  
  Case 2: $d(x, y) = s_k$. By construction, $d_\calA(\hat u(x), \hat u(y)) \le s_k$. Assume that
  $d_\calA(\hat u(x), \hat u(y)) = s_q < s_k$. Then $u(x, 0) \sqsubseteq u(y, q)$ in $\calA$.
  Since $u$ is an embedding, it follows that $(x, 0) \sqsubseteq (y, q)$ in $F(\calM)$, whence by definition
  $d(x, y) \le s_q < s_k$. Contradiction.

  Case 3: $d(x, y) = s_p$ where $1 \le p \le k - 1$. Then for all $i \le k - 1 - p$ we have that $(x, i) \sqsubseteq (y, i + p)$ and $(y, i) \sqsubseteq (x, i + p)$ in $F(\calM)$.
  Consequently, for all $i$ we have that $u(x, i) \sqsubseteq u(y, i + p)$ and $u(y, i) \sqsubseteq u(x, i + p)$ in $\calA$.
  Therefore, $d_\calA(\hat u(x), \hat u(y)) \le s_p$. If $d_\calA(\hat u(x), \hat u(y)) = s_q < s_p$ we reach a contradiction as in Case~2.
  
  So, the definition of $\Phi$ is correct. We still have to show that this family of maps
  satisfies the requirement~(PA) of Definition~\ref{opos.def.PA}.
  
  Let $\calM' = (M', d', \Boxed<)$ be a linearly ordered metric space that embeds into $\calM$ and let $f : \calM' \hookrightarrow \calM$
  be an embedding. Define $v : M' \times \{0, 1, \ldots, k\} \to M \times \{0, 1, \ldots, k\}$ by
  $$
    v(x, i) = (f(x), i).
  $$
  Let us show that $v$ is an embedding $F(\calM') \hookrightarrow F(\calM)$. Clearly,
  $(x, i) \sqsubseteq (y, j)$ in $F(\calM')$ iff $i \le j$ and $d'(x, y) \le s_j - s_i$
  iff $i \le j$ and $d(f(x), f(y)) \le s_j - s_i$ iff $(f(x), i) \sqsubseteq (f(y), j)$ in $F(\calM)$. Assume now that
  $(x, i) \prec (y, j)$ in $F(\calM')$. If $i < j$ then $(f(x), i) \prec (f(y), j)$ in $F(\calM)$. If, however, $i = j$
  then $x < y$, so $f(x) < f(y)$ and again $(f(x), i) \prec (f(y), j)$ in $F(\calM)$.
  
  Finally, let us show that $\Phi_{\calM, \calA}(u) \circ f = \Phi_{\calM', \calA}(u \circ v)$. Put $\hat u = \Phi_{\calM, \calA}(u)$
  and $\widehat{u \circ v} = \Phi_{\calM', \calA}(u \circ v)$. Then
  \begin{align*}
    \widehat{u \circ v}(x)
      &= \big(  u \circ v(x, 0), u \circ v(x, 1), \ldots, u \circ v(x, k - 1)  \big)\\
      &= \big(  u(f(x), 0), u(f(x), 1), \ldots, u(f(x), k - 1)  \big)\\
      &= \hat u (f(x)) = \hat u \circ f (x).
  \end{align*}

  $(b)$ If $\{0\} \union (I \sec A) = \{0\}$ the statement is trivially true. Assume, therefore, that
  $\{0\} \union (I \sec A) \ne \{0\}$. Then $A \ne \{0\}$.
  Take any $\calU, \calV \in \Ob(\catM_{\{0\} \union (I \sec A)})$ such that $\calU \hookrightarrow \calV$, and any $k \ge 2$.
  Since $\calV$ is finite, $S = \spec(\calV)$ is a finite subset of~$A$. By Lemma~\ref{opos.lem.tight}
  there is a finite tight set $T \subseteq A$ which contains $S$. Then $\calU, \calV \in \Ob(\catM_T)$ because
  $\spec(\calU) \subseteq \spec(\calV) = S \subseteq T$. The category $\catM_T$ has the Ramsey property by $(a)$, so
  there is a $\calW \in \Ob(\catM_T)$ such that $\calW \longrightarrow (\calV)^\calU_k$.
  Since, by construction, the smallest and the largest nonzero
  elements of $S$ and $T$ coincide and since $S \subseteq \{0\} \union (I \sec A)$
  it follows that $T \subseteq \{0\} \union (I \sec A)$, so $\catM_T$ is a full subcategory of $\catM_{\{0\} \union (I \sec A)}$
  whence $\calW \in \Ob(\catM_{\{0\} \union (I \sec A)})$.

  $(c)$ Directly from $(b)$.
\end{proof}

As a closing remark let us mention briefly the history of proving the Theorem~\ref{opos.thm.met}.
This result was proven in \cite{Nesetril-metric} and independently in \cite{DR12}. Ramsey
expansions of $S$-metric spaces for sets $S$ consisting of at most 4 elements are
shown in \cite{van-the-memoirs}. Finally \cite{Nesetril-AllThose} shows the
existence of a Ramsey expansion for all meaningful choices of $S$.
Interestingly, all these proofs are based on the partite construction.
In contrast to that, our proof of Theorem~\ref{opos.thm.met} follows
directly from the Graham-Rothschild Theorem.

\section{Acknowledgements}

The author would like to thank Miodrag Soki\'c as well as two anonimous referees for many valuable comments
which have significantly improved the presentation of the results and the historical background of the problems
treated in this paper.

The author gratefully acknowledges the support of the Grant No.~174019
of the Ministry of Education, Science and Technological Development of the
Republic of Serbia.


\begin{thebibliography}{99}
\bibitem{AH}
  F.\ G.\ Abramson, L.\ A.\ Harrington.
  Models without indiscernibles.
  J.~Symbolic Logic 43 (1978), 572--600.

\bibitem{AHS}
  J.\ Adamek, H.\ Herrlich, G.\ E.\ Strecker.
  Abstract and Concrete Categories: The Joy of Cats.
  Dover Books on Mathematics, Dover Publications 2009

\bibitem{preadj-1}
  R.\ B\"orger, W.\ Tholen.
  Abschw\"achungen des Adjunktionsbegriffs.
  Manuscripta Mathematica 19 (1976), 19--45.

\bibitem{DR12}
  D.\ Dellamonica, V.\ R\"odl.
  Distance preserving Ramsey graphs.
  Combinatorics, Probability and Computing, 21 (2012) 554--581.

\bibitem{fouche}
  W.\ L.\ Fouch\'e.
  Symmetry and the Ramsey degree of posets.
  15th British Combinatorial Conference (Stirling, 1995). Discrete Math.\ 167/168 (1997), 309--315.

\bibitem{GLR}
  R.\ L.\ Graham, K.\ Leeb, B.\ L.\ Rothschild.
  Ramsey's theorem for a class of categories.
  Advances in Math.\ 8 (1972) 417--443; errata 10 (1973) 326--327

\bibitem{GR}
  R.\ L.\ Graham, B.\ L.\ Rothschild.
  Ramsey's theorem for $n$-parameter sets.
  Tran.\ Amer.\ Math.\ Soc.\ 159 (1971), 257--292.

%
%
\bibitem{Nesetril-AllThose}
  J.\ Hubi\v cka, J.\ Ne\v set\v ril.
  All those Ramsey classes (Ramsey classes with closures and forbidden homomorphisms).
  Preprint, arXiv:1606.07979v1

\bibitem{leeb-cat}
    K.\ Leeb.
    The categories of combinatorics.
    Combinatorial structures and their applications. Gordon and Breach, New York (1970).

\bibitem{masulovic-ramsey}
  D.\ Ma\v sulovi\'c, L.\ Scow.
  Categorical equivalence and the Ramsey property for finite powers of a primal algebra.
  Algebra Universalis 78 (2017), 159--179

\bibitem{masul-NRT}
  D.\ Ma\v sulovi\'c.
  A New Proof of the Ne\v set\v ril-R\"odl Theorem.
  (accepted for publication in Applied Categorical Structures)

\bibitem{mu-pon}
  M.\ M\"uller, A.\ Pongr\'acz.
  Topological Dynamics of unordered Ramsey structures.
  Preprint, arXiv:1401.7766v2

\bibitem{N1995}
  J.\ Ne\v set\v ril.
  Ramsey theory. In: R.\ L.\ Graham, M.\ Gr\"otschel and L.\ Lov\'asz, eds, Handbook of Combinatorics, Vol.~2,
  1331--1403, MIT Press, Cambridge, MA, USA, 1995.

\bibitem{Nesetril}
  J.\ Ne\v set\v ril.
  Ramsey classes and homogeneous structures.
  Combinatorics, probability and computing, 14 (2005) 171--189.

\bibitem{Nesetril-metric}
  J.\ Ne\v set\v ril.
  Metric spaces are Ramsey.
  European Journal of Combinatorics 28 (2007), 457--468.

\bibitem{Nesetril-Rodl-1976}
  J.\ Ne\v set\v ril, V.\ R\"odl.
  The Ramsey property for graphs with forbidden complete subgraphs.
  J.\ Combin.\ Theory Ser B 20 (1976), 243--249.

\bibitem{Nesetril-Rodl}
  J.\ Ne\v set\v ril, V.\ R\"odl.
  Partitions of finite relational and set systems.
  J.\ Combin.\ Theory Ser.\ A 22 (1977), 289--312.

%

\bibitem{Nesetril-Rodl-1989}
  J.\ Ne\v set\v ril, V.\ R\"odl.
  The partite construction and Ramsey set systems.
  Discrete Math.\ 75 (1989), 327--334.

\bibitem{van-the-metric-spaces}
  L.\ Nguyen Van The.
  Ramsey degrees of finite ultrametric spaces, ultrametric Urysohn spaces and dynamics of their isometry groups.
  European J.\ Combin.\ 30 (4) (2009), 934--945.

\bibitem{van-the-memoirs}
  L.\ Nguyen Van Th\'e. Structural Ramsey theory of metric spaces and topological dynamics of isometry groups.
  Memoirs of the American Mathematical Society, Vol.\ 206 (2010), No.\ 968

\bibitem{vanthe-more}
  L.\ Nguyen Van Th\'e.
  More on the Kechris-Pestov-Todorcevic correspondence: precompact expansions.
  Fund.\ Math.\ 222 (2013), 19--47

\bibitem{PTW}
  M.\ Paoli, W.\ T.\ Trotter Jr., J.\ W.\ Walker.
  Graphs and orders in Ramsey theory and in dimension theory.
  Graphs and order (Banff, Alta., 1984), 351--394, NATO Adv.\ Sci.\ Inst.\ Ser.\ C Math.\ Phys.\ Sci., 147,
  Reidel, Dordrecht, 1985.

\bibitem{Promel-Book}
  H.\ J.\ Pr\"omel.
  Ramsey Theory for Discrete Structures.
  Springer (2013).

\bibitem{preadj-2}
  A.\ Wiweger.
  Pre-adjunctions and lambda-algebraic theories.
  Colloquium Mathematicum 48 (1984), 153--165

%
%
\bibitem{zucker}
  A.\ Zucker.
  Topological dynamics of automorphism groups, ultrafilter combinatorics, and the Generic Point Problem.
  Trans.\ Amer.\ Math.\ Soc.\ 368 (2016), 6715--6740
\end{thebibliography}
\end{document}